\newtheorem{thm}{Theorem}
\newtheorem{cor}{Corollary}[thm]
\newtheorem*{thm*}{Theorem}
\newtheorem*{lemma*}{Lemma}
\newtheorem*{prop*}{Proposition}
\newtheorem*{rmk*}{Remark}
\newtheorem*{cor*}{Corollary}
\newtheorem*{claim*}{Claim}
\newtheoremstyle{named}{}{}{\itshape}{}{\bfseries}{.}{.5em}{#1 \thesubsection}
\theoremstyle{named}
\newtheorem*{namedprop}{Proposition}
\newtheorem*{namedcor}{Corollary}
\newtheoremstyle{quoted}{}{}{\itshape}{}{\bfseries}{.}{.5em}{#1 #3}
\theoremstyle{quoted}
\newtheorem*{quotedtheorem}{Theorem}
\newtheorem*{quotedprop}{Proposition}
\newtheorem*{quotedcor}{Corollary}
\numberwithin{figure}{section}
\let\OLDthebibliography\thebibliography
\renewcommand\thebibliography[1]{
  \OLDthebibliography{#1}
  \small
  \setlength{\parskip}{3pt}
  \setlength{\itemsep}{3pt plus 0.3ex}
}
\newcommand{\mypar}[1]{\refstepcounter{subsection}\paragraph{\thesubsection}\label{#1}}
\newcommand{\myspar}[2]{\refstepcounter{subsubsection}\subparagraph{\thesubsubsection.}\label{#1}{\em #2.}}
\def\tr{\operatorname{tr}}
\def\logT{{\frac{\log T}{2\pi}}}
\begin{document}

\title{{\Large On the critical points of random matrix characteristic polynomials and of the Riemann $\xi$-function}}
\author{\normalsize Sasha Sodin\textsuperscript{1}}
\date{{\small\today}}
\maketitle

\footnotetext[1] {\,\,\,School of Mathematical
Sciences, Queen Mary University of London, London E1~4NS, United Kingdom $\binampersand$ School of Mathematical
Sciences, Tel Aviv University, Tel Aviv, 69978, Israel.  E-mail:
a.sodin{\MVAt}qmul.ac.uk. Supported in part by the European
Research Council start-up grant 639305 (SPECTRUM).}

\begin{abstract} A one-parameter family of point processes describing the 
distribution of the 
critical points of the characteristic polynomial of large random Hermitian matrices on 
the scale of mean spacing is investigated. Conditionally on the Riemann hypothesis 
and the multiple correlation conjecture, we show that one of these limiting processes also 
describes the distribution of the critical points of the Riemann $\xi$-function on 
the critical line.

We prove that each of these processes boasts stronger level repulsion than the sine process  describing the limiting statistics of the eigenvalues: the probability to find $k$
critical points in a short interval is comparable to the probability to
find $k+1$ eigenvalues there. 
We also prove a similar property for the critical
points and zeros of the Riemann $\xi$-function, conditionally on the Riemann hypothesis
but not on the multiple correlation conjecture.
\end{abstract}

\section{Introduction}

\mypar{intr1}
Let $\mathfrak{Si}$ be the sine point process, i.e.\ a random locally finite 
subset  of $\mathbb R$ the distribution of which is determined by
\begin{equation}\label{eq:defsine} \mathbb{E} \!\!\!\!\sum_{\substack{x_1,  \cdots , x_k \in \mathfrak{Si}\\\text{pairwise distinct}}}\!\!\!\!
f(x_1, \cdots, x_k) =  \int d^k x f(x_1, \cdots, x_k) \det\left( \frac{\sin \pi(x_j - x_l)}{\pi (x_j - x_l)}\right)_{j,l=1}^k\end{equation}
The sine process describes the eigenvalue distribution of  random Hermitian matrices 
on the scale of mean eigenvalue spacing. For complex Wigner matrices (a class of
high-dimensional Hermitian random 
matrices with independent entries above the main diagonal, cf.\ Section~\ref{intr}), this is expressed by the following relation, 
which is part of a series of results obtained by Erd\H{o}s--Yau, Tao--Vu and coworkers, 
see \cite{EY,TV}: if $H_N$ is a sequence of   random matrices of growing dimension satisfying the assumptions listed in Section~\ref{intr}, and $\lambda_{j,N}$ are the eigenvalues
of $H_N/\sqrt{N}$, then for $E \in (-2, 2)$
\begin{equation}\label{eq:universality}\left\{ (\lambda_{j,N} - E) \frac{N\sqrt{4-E^2}}{2\pi} \right\}_{j=1}^N \longrightarrow \mathfrak{Si} \quad
\text{in distribution} \end{equation}
with respect to the topology defined by continuous test functions of compact support.
The factor $ \frac{2\pi}{N\sqrt{4-E^2}}$ by which the eigenvalues are scaled is the
(approximate) mean spacing between eigenvalues near $E$. Similar results are 
available for other random matrix ensembles, see the monographs \cite{AGZ09,PShch} and references therein.

The  correlation conjecture of Montgomery \cite{Mont} in the extended version of
 Rudnick--Sarnak \cite{RS} and Bogomolny--Keating \cite{BK1,BK2} states that a similar relation
 holds for the zeros of the Riemann $\zeta$-function on the critical line: if $t$ is
 chosen uniformly at random in $[0, T]$, then  
\begin{equation}\label{eq:multcor} \left\{ (\gamma - t) \frac{\log T}{2\pi} \,\, \Big| \,\, \zeta(\frac{1}{2} + i \gamma) = 0\right\} \overset{??}\longrightarrow \mathfrak{Si} \quad
\text{in distribution} \end{equation}
(the question marks are put to emphasise that this relation is still conjectural).
The scaling factor $\frac{2\pi}{\log T}$ is the approximate mean spacing between
the zeros with imaginary part near $T$.
The results of \cite{Mont,Hej,RS} imply that, conditionally on the Riemann hypothesis,
convergence holds for a restricted family of test functions.

\medskip\noindent
Following Aizenman and Warzel \cite{AW} and Chhaibi, Najnudel and Nikeghbali \cite{CNN}, consider the random entire function 
\begin{equation}\label{eq:defphi}\Phi(z) = \lim_{R \to \infty} \prod_{x \in \mathfrak{Si} \cap (-R, R)} (1 - z/x)~. \end{equation}
We study the one-parameter family of  point processes
\[ \mathfrak{Si}'_a = \left\{ z \in \mathbb{C}\, \Big| \, \Phi'(z) = a \Phi(z)\right\} \subset \mathbb{R}~, \quad a \in \mathbb{R}~. \]
For any $a$, the points of $\mathfrak{Si}'_a$ interlace with those of $\mathfrak{Si}$. Therefore the statistical properties of $\mathfrak{Si}'_a$ on long scales are
very close to those of $\mathfrak{Si}$. Also, $\mathfrak{Si}'_a \to \mathfrak{Si}$
as $a \to \infty$. 

On the other hand, on short scales the
processes $\mathfrak{Si}'_a$ are much more rigid. To quantify this, introduce the
 events
\[ \Omega_k(\mathfrak{Si}, \epsilon) = \left\{ \# \left[\mathfrak{Si} \cap (-\epsilon, \epsilon)\right]  \geq k\right\}~, \quad
\Omega_k(\mathfrak{Si}'_a, \epsilon) = \left\{ \# \left[ \mathfrak{Si}'_a \cap (-\epsilon, \epsilon)\right] \geq k \right\}~. \]
From the special case 
\[ \mathbb{E} \frac{(\# [\mathfrak{Si} \cap (-\epsilon, \epsilon)])!}{(\# [\mathfrak{Si} \cap (-\epsilon, \epsilon)] - k)!} = \int_{(-\epsilon,\epsilon)^k} d^k x\,\det \left(\frac{\sin \pi(x_j - x_l)}{\pi (x_j - x_l)}\right)_{j,l=1}^k~\]
of (\ref{eq:defsine}), the sine process boasts the following  repulsion property:
for any $k \geq 1$,
\begin{equation}\label{eq:rep0}
\mathbb{P} (\Omega_k (\mathfrak{Si}, \epsilon)) 
= \mathcal c_k \epsilon^{k^2} + \mathcal o(\epsilon^{k^2})~, \quad \epsilon \to + 0~,
\quad \text{where $0 < c_k < \infty$.}\end{equation}
For comparison, the probability of the corresponding event in the standard
Poisson process decays as $\epsilon^k$. Our first result is 
\begin{thm}\label{thm:sine}
For any $a \in \mathbb R$, $k \geq 2$ and $0 < \epsilon < 1$
\begin{equation}\label{eq:thmsine}
\mathbb{P} \left(\Omega_k(\mathfrak{Si}'_a, \epsilon) \setminus \Omega_{k+1}(\mathfrak{Si}, 5 \epsilon)\right) \leq C_k \left(\epsilon \log \frac{1}{\epsilon}\right)^{(k+2)^2}~.\end{equation}
\end{thm}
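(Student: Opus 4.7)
The plan is to use the interlacing between $\mathfrak{Si}'_a$ and $\mathfrak{Si}$ to convert the excess event into a one-sided tail event for a single linear statistic of the sine process, and then control that tail by a high-moment bound and Markov's inequality. Recall that $L(z) := \Phi'(z)/\Phi(z) = \mathrm{PV}\sum_{x \in \mathfrak{Si}} (z-x)^{-1}$ is strictly decreasing from $+\infty$ to $-\infty$ on each gap between consecutive points of $\mathfrak{Si}$, so $\mathfrak{Si}'_a = \{L=a\}$ has exactly one point per gap and interlaces with $\mathfrak{Si}$. On $\Omega_k(\mathfrak{Si}'_a, \epsilon)$, label the critical points $y_1 < \cdots < y_k$ in $(-\epsilon, \epsilon)$ with bracketing zeros $x_0 < y_1 < x_1 < \cdots < y_k < x_k$; since $k \ge 2$, the inner zeros $x_1, \ldots, x_{k-1}$ all lie in $(-\epsilon, \epsilon)$. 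On the bad event, $\Omega_{k+1}(\mathfrak{Si}, 5\epsilon)$ fails, hence at least one of $x_0, x_k$ lies outside $(-5\epsilon, 5\epsilon)$; by the $z \mapsto -z$ symmetry of $\mathfrak{Si}$ I restrict to $x_k > 5\epsilon$ at the cost of a factor $2$.

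The critical-point equation now forces a large deviation of a far-field sum. No zero of $\mathfrak{Si}$ lies in $(y_k, x_k) \supset (\epsilon, 5\epsilon)$, so every $x \in \mathfrak{Si} \cap (-5\epsilon, 5\epsilon)$ satisfies $x < \epsilon$ and contributes non-negatively to $L(\epsilon)$, with the $x_{k-1}$ summand alone at least $(2\epsilon)^{-1}$. Writing
\[ D_\epsilon \;:=\; \mathrm{PV}\!\!\!\sum_{x \in \mathfrak{Si},\, |x| > 5\epsilon}\!\! \frac{1}{\epsilon - x}, \]
we thus have $L(\epsilon) \ge (2\epsilon)^{-1} + D_\epsilon$. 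Since $L$ decreases on $(x_{k-1}, x_k)$ and $y_k < \epsilon$, also $L(\epsilon) \le L(y_k) = a$, whence $D_\epsilon \le a - (2\epsilon)^{-1}$; for $\epsilon$ below a threshold depending on $|a|$ this forces $|D_\epsilon| \ge (3\epsilon)^{-1}$.

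Finally, $D_\epsilon$ is a linear statistic of the sine process for the smooth test function $(\epsilon - x)^{-1}\mathbf{1}_{|x| > 5\epsilon}$. Known variance and cumulant estimates for linear statistics of sine-kernel determinantal processes (via Soshnikov's Fourier formulas, or equivalently an integration by parts against the centered counting function, whose fluctuations are logarithmic) give the moment bound $\mathbb{E}|D_\epsilon - \mathbb{E} D_\epsilon|^{2p} \le C_p (\log(1/\epsilon))^p$ for every $p \ge 1$; since $\mathbb{E} D_\epsilon = O(1)$ (the deterministic principal value), the same bound holds for $\mathbb{E}|D_\epsilon|^{2p}$. Taking $2p = (k+2)^2$ and applying Markov's inequality,
\[ \mathbb{P}\!\left(|D_\epsilon| \ge (3\epsilon)^{-1}\right) \;\le\; (3\epsilon)^{(k+2)^2}\,\mathbb{E}|D_\epsilon|^{(k+2)^2} \;\le\; C_k\,\bigl(\epsilon\log(1/\epsilon)\bigr)^{(k+2)^2} \]
on $0 < \epsilon \le 1/e$; the remaining range of $\epsilon$ is absorbed into the constant (using the rapid decay of the sine-process gap probability $\mathbb{P}(N_{(-5\epsilon,5\epsilon)} \le k)$ as $\epsilon \to 1^-$). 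Doubling for the symmetric case $x_0 < -5\epsilon$ completes the argument. The main technical obstacle is the moment bound on $D_\epsilon$; the rest is a deterministic reduction, and the log factor in the theorem is the direct trace of the $O(\log(1/\epsilon))$ variance of sine-process linear statistics at scale $\epsilon$.
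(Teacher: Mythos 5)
Your interlacing reduction to a far-field Cauchy sum is exactly the right first step, and it is essentially the paper's. The gap is in the claimed moment bound for $D_\epsilon = \operatorname{PV}\sum_{|x|>5\epsilon}\frac{1}{\epsilon-x}$: the estimate $\mathbb{E}|D_\epsilon-\mathbb{E}D_\epsilon|^{2p}\le C_p(\log(1/\epsilon))^p$ is false. The test function $f_\epsilon(x)=\mathbbm{1}_{|x|>5\epsilon}/(\epsilon-x)$ has $\|f_\epsilon\|_\infty\sim\epsilon^{-1}$ and $\int f_\epsilon^2\sim\epsilon^{-1}$, and already the variance of $D_\epsilon$ under the sine process is of order $\epsilon^{-1}$ (one sees this from $\mathrm{Var}=\int|\widehat{f_\epsilon}(\xi)|^2\min(|\xi|,1)\,d\xi$ using that $|\widehat{f_\epsilon}(\xi)|\approx\pi$ on $|\xi|\lesssim\epsilon^{-1}$), not $\log(1/\epsilon)$. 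The logarithmic heuristic you invoke is correct for bounded test functions such as indicators of long intervals, but here the singularity near the cutoff $\pm5\epsilon$ dominates. In fact the conclusion cannot be repaired by any unconditional moment or tail bound for this $D_\epsilon$: with probability $\gtrsim\epsilon$ the sine process has a point in $(5\epsilon,6\epsilon)$, and that single summand is already $\le -1/(4\epsilon)$, so $\mathbb{P}\bigl(D_\epsilon\le -(3\epsilon)^{-1}\bigr)\gtrsim\epsilon$, which is vastly larger than the target $(\epsilon\log(1/\epsilon))^{(k+2)^2}$. Your set inclusion is therefore too lossy: points just outside the fixed cutoff $5\epsilon$ account for most of the tail of $D_\epsilon$, while contributing nothing to the event you actually want to bound.

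The paper's fix is to introduce an intermediate scale $R\epsilon$. One first proves the sharper inclusion
\[
\Omega_k(\mathfrak{Si}'_a,\epsilon)\setminus\bigl(\Omega_{k+1}(\mathfrak{Si},(1+\tfrac{4}{k-1})\epsilon)\cup\Omega_{k+2}(\mathfrak{Si},R\epsilon)\bigr)\subset\Bigl\{\textstyle\sum_{|x-\epsilon|\ge(R-1)\epsilon}\frac{1}{x-\epsilon}\ge\frac{k-1}{4\epsilon}\Bigr\}\cup(\text{symmetric}),
\]
where the far-field sum now has $\|f\|_\infty\le\frac{1}{(R-1)\epsilon}$ and $\mathbb{E}\sum f^2\lesssim\frac{1}{R\epsilon}$. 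An exponential Markov bound for determinantal linear statistics (the $\log\det(1+(e^{tf}-1)K)$ trick) with $t=(R-1)\epsilon$ then gives $\le 2\exp(-ckR)$ for this first part. One then adds back $\mathbb{P}(\Omega_{k+2}(\mathfrak{Si},R\epsilon))\sim(R\epsilon)^{(k+2)^2}$ from (\ref{eq:rep0}), and optimizes $R\sim k\log(1/\epsilon)$ so both pieces are $\lesssim(\epsilon\log(1/\epsilon))^{(k+2)^2}$. The extra excision of $\Omega_{k+2}(\mathfrak{Si},R\epsilon)$, and the matching push of the far-field cutoff out to scale $R\epsilon$, is precisely what neutralizes the ``one stray zero near the boundary'' scenario that kills your unconditional bound; without it no polynomial moment of $D_\epsilon$ can do better than $O(\epsilon)$.
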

\noindent That is, $k$-tuples of critical points in a short interval (for a fixed value
of $k$) are mostly due to
$(k+1)$-tuples of zeros in a slightly larger interval. From (\ref{eq:thmsine}) and (\ref{eq:rep0}) we  have
\[ (\mathcal c_{k+1} + \mathcal o(1))\epsilon^{(k+1)^2} \leq \mathbb{P} (\Omega_k (\mathfrak{Si}'_a, \epsilon))
\leq  (5^{(k+1)^2}\mathcal c_{k+1} + \mathcal o(1)) \epsilon^{(k+1)^2}~, \quad \epsilon \to +0~.\]
A slightly more careful argument shows that
\begin{cor}\label{cor:limit} For any $k \geq 2$ there exists a limit
\[\mathcal c_k' = \lim_{\epsilon\to +0} \frac{\mathbb{P} (\Omega_k(\mathfrak{Si}'_a, \epsilon))}
{\epsilon^{(k+1)^2}} \in [\mathcal c_{k+1}, (1 + \frac{4}{k-1})^{(k+1)^2} \mathcal c_{k+1}]~,\]
independent of $a \in \mathbb R$.
\end{cor}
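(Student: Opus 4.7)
The plan is to sandwich $\mathbb{P}(\Omega_k(\mathfrak{Si}'_a,\epsilon))$ between $\mathcal c_{k+1}\epsilon^{(k+1)^2}(1+o(1))$ and $(1+4/(k-1))^{(k+1)^2}\mathcal c_{k+1}\epsilon^{(k+1)^2}(1+o(1))$, and then to identify $\mathcal c_k'$ explicitly as an integral over configurations of $k+1$ real points.

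\textbf{Lower bound.} Between any two consecutive zeros $x_i<x_{i+1}$ of $\Phi$, the meromorphic function $\Phi'/\Phi$ decreases strictly from $+\infty$ to $-\infty$ (because $(\Phi'/\Phi)'=-\sum_x(y-x)^{-2}<0$), so $\Phi'/\Phi=a$ has exactly one root in $(x_i,x_{i+1})$. Hence $\mathfrak{Si}'_a$ and $\mathfrak{Si}$ strictly interlace, and $k+1$ points of $\mathfrak{Si}$ in $(-\epsilon,\epsilon)$ contain $k$ interlaced points of $\mathfrak{Si}'_a$ in the same interval: $\Omega_{k+1}(\mathfrak{Si},\epsilon)\subseteq\Omega_k(\mathfrak{Si}'_a,\epsilon)$. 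Combined with (\ref{eq:rep0}), this yields $\liminf \epsilon^{-(k+1)^2}\mathbb{P}(\Omega_k(\mathfrak{Si}'_a,\epsilon))\geq\mathcal c_{k+1}$.

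\textbf{Upper bound.} On $\Omega_k(\mathfrak{Si}'_a,\epsilon)$ label the critical points $y_1<\dots<y_k\in(-\epsilon,\epsilon)$ and the interlaced zeros $x_0<y_1<x_1<\dots<x_{k-1}<y_k<x_k$. The identity $\sum_{x\in\mathfrak{Si}}(y_1-x)^{-1}=a$ at $y_1$ shows that the inner zeros $x_1,\dots,x_{k-1}$ produce a negative contribution of magnitude at least $(k-1)/(2\epsilon)$ (since each $|y_1-x_j|\leq 2\epsilon$). Off an event of probability $o(\epsilon^{(k+1)^2})$, the contribution of the remaining zeros is bounded (by the same rigidity estimates used to prove Theorem \ref{thm:sine}), so this large negative sum must be balanced by the positive term $(y_1-x_0)^{-1}$, forcing $y_1-x_0\leq 2\epsilon/(k-1)+\text{lower order}$. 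A symmetric bound at $y_k$ gives $x_k-y_k\leq 2\epsilon/(k-1)+\text{lower order}$. The slack factor $(1+4/(k-1))$ in the statement (rather than the tighter $(1+2/(k-1))$) absorbs these lower-order corrections; hence $\Omega_k(\mathfrak{Si}'_a,\epsilon)\subseteq\Omega_{k+1}(\mathfrak{Si},(1+4/(k-1))\epsilon)$ off the negligible event, and (\ref{eq:rep0}) yields $\limsup\leq(1+4/(k-1))^{(k+1)^2}\mathcal c_{k+1}$.

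\textbf{Existence of the limit.} Rescale $u_j=x_j/\epsilon$ and $v_j=y_j/\epsilon$. The short-distance expansion of the sine-kernel determinant gives $\det(K(x_i,x_j))_{i,j=0}^k = C_k \prod_{i<j}(x_i-x_j)^2(1+O(\epsilon^2))$, so conditional on having exactly $k+1$ zeros of $\mathfrak{Si}$ clustering near the origin on scale $\epsilon$, the joint law of $(u_0,\dots,u_k)$ converges to the Vandermonde-squared measure $Z^{-1}\prod_{i<j}(u_i-u_j)^2\,du_0\cdots du_k$. The rescaled critical-point equation reads $\sum_i(v-u_i)^{-1}=a\epsilon+O(\epsilon\log(1/\epsilon))$, so the $v_j$ converge to the critical points of the polynomial $p_u(v)=\prod_i(v-u_i)$, independently of $a$. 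The event $\Omega_k(\mathfrak{Si}'_a,\epsilon)$ therefore reduces in the limit to $S:=\{(u_0,\dots,u_k):\text{all critical points of }p_u\text{ lie in }(-1,1)\}$, and
\[\mathcal c_k' = \mathcal c_{k+1}\cdot\frac{\int_S\prod_{i<j}(u_i-u_j)^2\,du_0\cdots du_k}{\int_{(-1,1)^{k+1}}\prod_{i<j}(u_i-u_j)^2\,du_0\cdots du_k}.\]
By Gauss--Lucas $S$ contains $(-1,1)^{k+1}$ (recovering the lower bound) and, by the previous step, is contained in $(-(1+4/(k-1)),1+4/(k-1))^{k+1}$; the ratio of integrals then lies in the stated interval by the scaling $\int_{(-\delta,\delta)^{k+1}}\prod(u_i-u_j)^2\,du=\delta^{(k+1)^2}\int_{(-1,1)^{k+1}}\prod(u_i-u_j)^2\,du$. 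The limit is manifestly independent of $a$. The main obstacle is making the conditional convergence to the Vandermonde-squared measure uniform over all of $S$ while controlling the $O(\epsilon\log(1/\epsilon))$ correction in the rescaled critical-point equation without losing the leading order---both steps lean on the sine-process rigidity estimates developed for Theorem \ref{thm:sine}.
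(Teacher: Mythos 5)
Your proof takes essentially the same route as the paper. The paper proves the bounds $\mathcal c_k'\in[\mathcal c_{k+1},(1+\tfrac{4}{k-1})^{(k+1)^2}\mathcal c_{k+1}]$ exactly as you do — the lower one by the interlacing inclusion $\Omega_{k+1}(\mathfrak{Si},\epsilon)\subseteq\Omega_k(\mathfrak{Si}'_a,\epsilon)$ together with (\ref{eq:rep0}), the upper one by invoking Theorem~\ref{thm:sine}' with $R\sim k\log(1/\epsilon)$ (your step re-derives the inclusion argument that underlies Theorem~\ref{thm:sine}', which you could equally just cite). For existence and independence of $a$, the paper introduces an intermediary event $\Upsilon(\epsilon)$ — ``there are $k+1$ points of $\mathfrak{Si}$ in a slightly wider window $(-\epsilon^{1-\alpha_k},\epsilon^{1-\alpha_k})$ whose discrete derivative has all zeros in $(-\epsilon,\epsilon)$'' — shows $\Omega_k(\mathfrak{Si}'_a,\epsilon)$ is sandwiched between $\Upsilon(\epsilon\mp\epsilon^{1+\alpha_k'})$ up to $o(\epsilon^{(k+1)^2})$ errors controlled by the same exponential tail lemma you cite, and then evaluates $\mathbb P(\Upsilon(\epsilon))$ via the short-distance Vandermonde asymptotics of the sine kernel. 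This is exactly your ``rescale by $\epsilon$, pass to the Vandermonde$^2$ density, and replace $\sum_i(v-u_i)^{-1}=a\epsilon+O(\epsilon\log(1/\epsilon))$ by $p_u'(v)=0$'' step; your closed form $\mathcal c_k'=\mathcal c_{k+1}\,\int_S\Delta^2/\int_{(-1,1)^{k+1}}\Delta^2$ is the same constant the paper writes as $c_{k,2}c_{k,3}$ (after trading the translation degree of freedom for the factor $\epsilon$ in the paper's integral over $\{\sum x_j=0\}$), and it has the virtue of making the interval bound transparent via $(-1,1)^{k+1}\subsetneq S\subset(-(1+\tfrac{2}{k-1}),1+\tfrac{2}{k-1})^{k+1}$. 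One minor point: you should state explicitly that the candidate $k+1$ zeros must be sought in a window \emph{slightly wider} than $(-\epsilon,\epsilon)$ (as the paper does with $\epsilon^{1-\alpha_k}$), since otherwise the conditioning would silently restrict to $(-1,1)^{k+1}\subset S$ and return ratio $1$; as written your ``clustering near the origin on scale $\epsilon$'' is ambiguous on this point, although the final formula shows you have the right picture in mind.
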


Combining Corollary~\ref{cor:limit} with a  result of Aizenman--Warzel \cite{AW} which
is stated as Proposition~\ref{aw} below, we obtain
\begin{cor}\label{cor:wig} Let $(H_N)$ be a sequence of complex Wigner matrices satisfying the assumptions listed in Section~\ref{intr}, and let $(\lambda_{j,N}')_{j=1}^{N-1}$ be the critical points of the characteristic polynomial
$P_N(\lambda) = \det(H_N / \sqrt{N} - \lambda)$. For $E \in (-2,2)$ and $k \geq 2$,
\[ \lim\limits_{N \to \infty}\mathbb{P} \left\{ \# \left[ \left| \lambda'_{j,N} - E \right| < \frac{2 \pi \epsilon}{N\sqrt{4-E^2}} \right]\geq k \right\}
= (\mathcal c_k' + \mathcal o(1))\epsilon^{(k+1)^2}~. \]
\end{cor}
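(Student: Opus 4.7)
The plan is to combine Proposition~\ref{aw} of Aizenman--Warzel, which I will take to provide convergence of the rescaled critical point process to some $\mathfrak{Si}'_a$, with the $a$-independent small-$\epsilon$ asymptotic supplied by Corollary~\ref{cor:limit}. First I would invoke Proposition~\ref{aw} to conclude that
\[ \mathcal{P}_N = \left\{(\lambda'_{j,N} - E)\,\frac{N\sqrt{4-E^2}}{2\pi}\right\}_{j=1}^{N-1} \longrightarrow \mathfrak{Si}'_a \quad \text{in distribution,} \]
in the topology defined by continuous compactly supported test functions, for some $a \in \mathbb{R}$ which may in principle depend on $E$.

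Next, for each fixed $\epsilon > 0$ I would promote this vague convergence of random measures to the convergence of the discrete probability $\mathbb{P}(\Omega_k(\mathcal{P}_N, \epsilon))$. The key observation is that the points of $\mathfrak{Si}'_a$ interlace with those of $\mathfrak{Si}$, so the one-point intensity of $\mathfrak{Si}'_a$ is dominated by Lebesgue measure; in particular $\mathbb{P}(\{-\epsilon, \epsilon\} \cap \mathfrak{Si}'_a \neq \emptyset) = 0$. Consequently the integer-valued functional $\nu \mapsto \#(\nu \cap (-\epsilon,\epsilon))$ is almost surely continuous at the law of $\mathfrak{Si}'_a$; sandwiching $\mathbf{1}_{(-\epsilon,\epsilon)}$ between continuous bumps of slightly larger and slightly smaller support and applying the portmanteau theorem yields
\[ \lim_{N\to\infty}\mathbb{P}(\Omega_k(\mathcal{P}_N, \epsilon)) = \mathbb{P}(\Omega_k(\mathfrak{Si}'_a, \epsilon)). \]

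Finally, Corollary~\ref{cor:limit} identifies the right-hand side as $(\mathcal c_k' + \mathcal o(1))\epsilon^{(k+1)^2}$ as $\epsilon \to 0^+$, with $\mathcal c_k'$ independent of $a$. This gives the asserted asymptotic.

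The main obstacle I anticipate is the second step --- passing from vague convergence to a statement about a specific integer count. It is a standard continuity argument for point processes with absolutely continuous intensity, and reduces here to the interlacing of $\mathfrak{Si}'_a$ with $\mathfrak{Si}$. A convenient feature of this route is that the exact value of $a$ produced by Proposition~\ref{aw} need not be pinned down, because Corollary~\ref{cor:limit} is $a$-uniform.
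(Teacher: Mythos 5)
Your proof follows the same route the paper takes (one line: invoke the Corollary of Section~\ref{aw} giving joint convergence $(\lambda_{j,N}, \lambda'_{j,N}) \to (\mathfrak{Si}, \mathfrak{Si}'_a)$ with $a = -\pi E/\sqrt{4-E^2}$, then feed the result into Corollary~\ref{cor:limit}); you simply make the portmanteau/continuity step explicit, which is a reasonable thing to do. One substantive remark: the justification you give for $\mathbb{P}(\{-\epsilon,\epsilon\} \cap \mathfrak{Si}'_a \neq \emptyset) = 0$ does not actually follow from interlacing. Interlacing only yields $\#(\mathfrak{Si}'_a\cap I) \le \#(\mathfrak{Si}\cap I)+1$, so the one-point intensity of $\mathfrak{Si}'_a$ satisfies $\rho_1(I) \le |I|+1$; this does not force absolute continuity, and in particular does not by itself exclude an atom of $W(\pm\epsilon)$ at the value $-a$. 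The quickest correct argument is translation invariance: since $\mathfrak{Si}$ is translation invariant and $W$, hence $\mathfrak{Si}'_a = W^{-1}(-a)$, transforms covariantly under shifts of the configuration, $\mathfrak{Si}'_a$ is itself translation invariant, so its first intensity is a multiple of Lebesgue measure and has no atoms. (Alternatively, \cite{AW} shows $W(x_0)$ has a Cauchy distribution for fixed real $x_0$, so $\mathbb{P}[W(x_0)=-a]=0$ directly.) With that patch your argument is complete and matches the paper's.
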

The stronger repulsion between the critical points can be seen on Figures~\ref{fig} and
\ref{fig2}.
\begin{figure}[h]
\centerline{\includegraphics[trim={0cm 0cm  0cm 0}, scale=.48,angle=270]{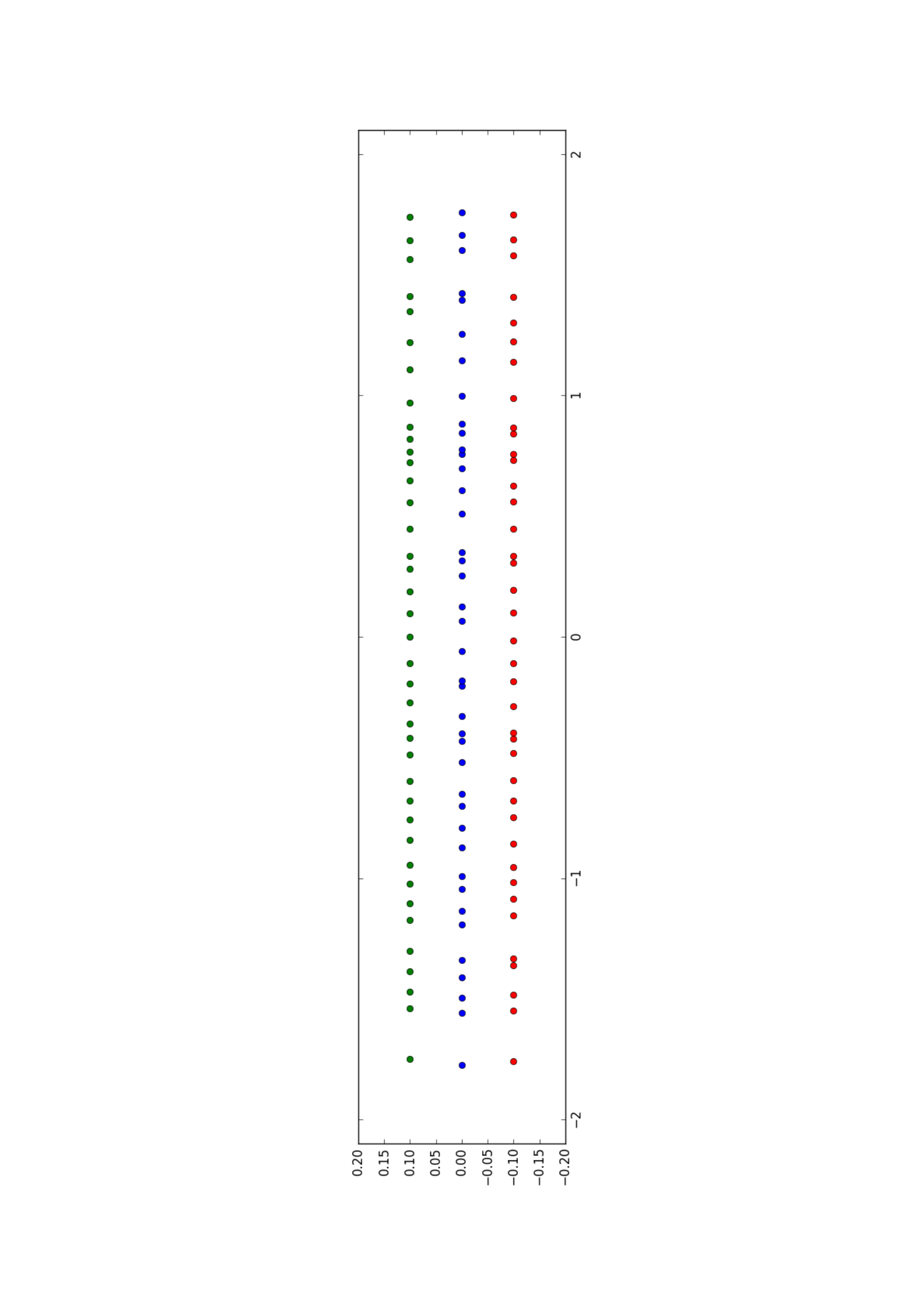}}
\caption{The critical points of the characteristic polynomial  of GUE$_{40}$, the eigenvalues,
and the eigenvalues of a principal submatrix of dimension $39$.\label{fig}}
\end{figure}

\begin{figure}[h]
\centerline{\includegraphics[trim={0cm 0cm  0cm 0}, scale=.4,angle=0]{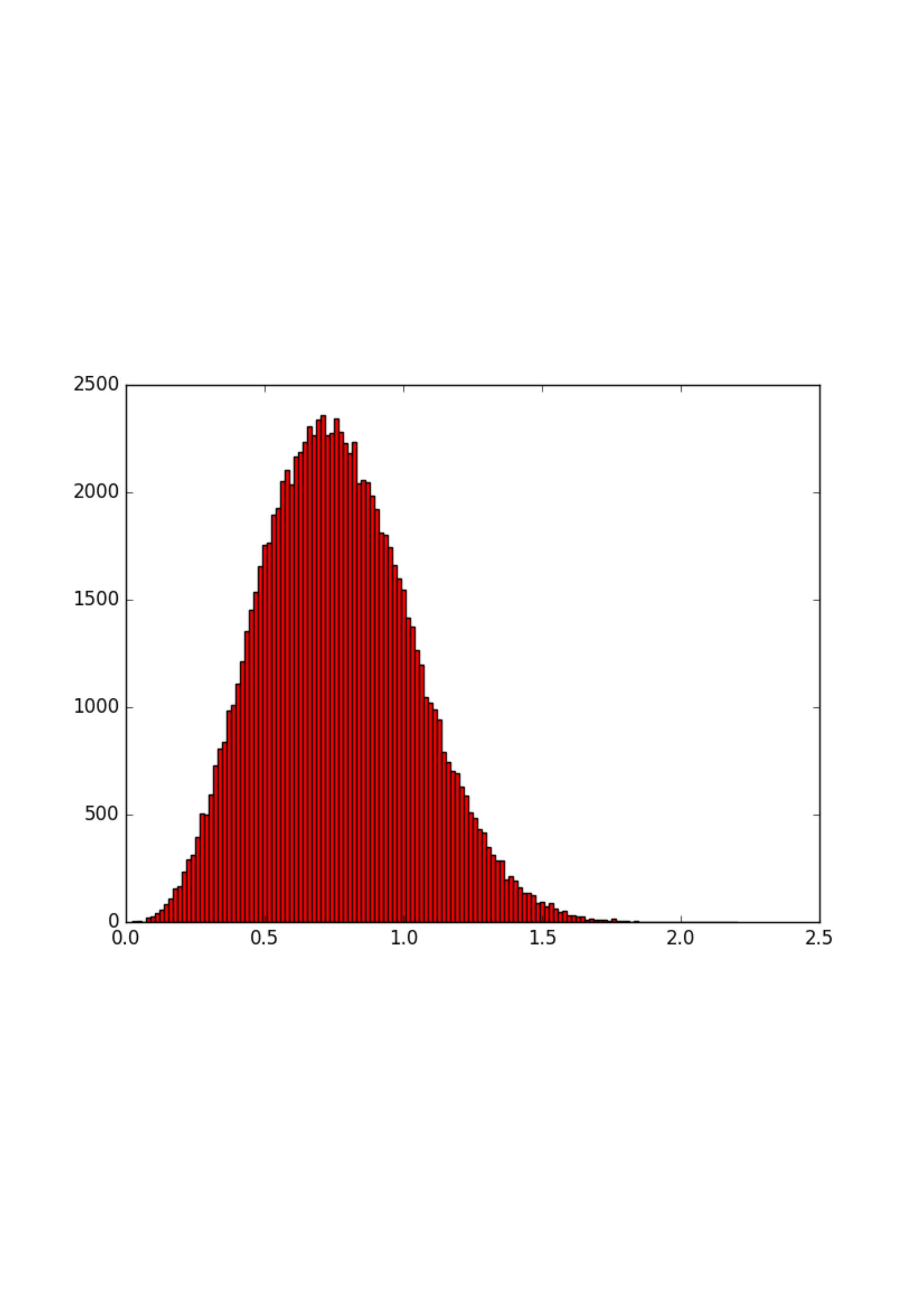}
\includegraphics[trim={0cm 0cm  0cm 0}, scale=.4,angle=0]{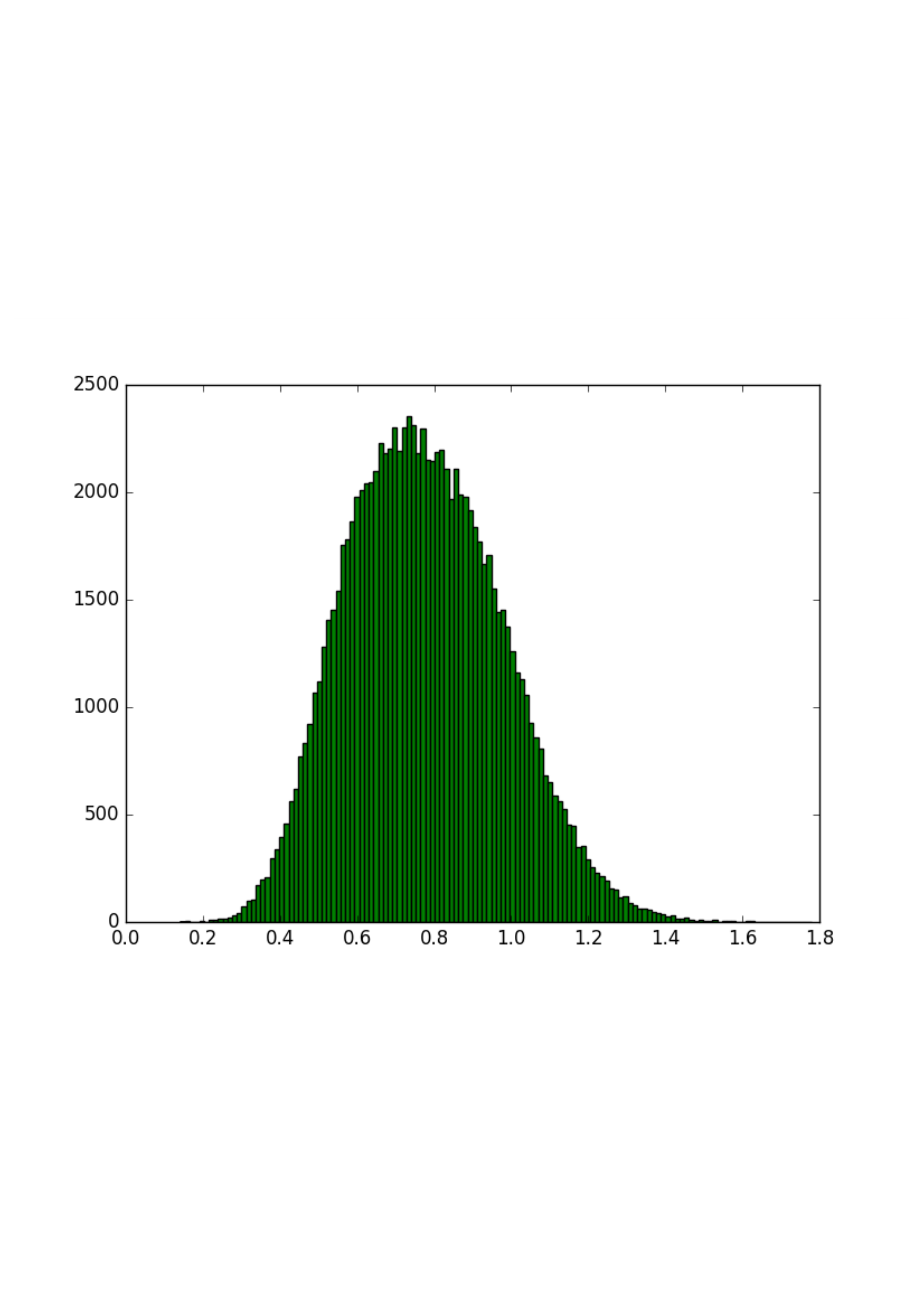}}
\caption{\label{fig2}A histogram of the absolute values of the eigenvalue (red, left) and
critical value (green, right) second closest to zero for GUE$_{50}$, multiplied by $50 \pi$}
\end{figure}

\medskip\noindent In the number-theoretic setting, we consider the Riemann $\xi$-function
\[ \xi (s)={\tfrac  {s(s-1)}{2\pi^{s/2}}}\Gamma \left({\tfrac  {s}{2}}\right)\zeta (s)~. \]
This is an entire function which is real on the critical line; its zeros 
coincide with the non-trivial zeros of the $\zeta$-function. Conditionally
on the Riemann hypothesis, the zeros  of $\xi'$ lie on the critical line $\Re s = \frac12$ and interlace with the zeros of $\xi$ (cf.\ Section~\ref{xi}). Assuming the Riemann
hypothesis together with the multiple correlation conjecture, we prove (see Corollary~\ref{xi}) that
\begin{equation}\label{eq:multcor'''} \left\{ (\gamma' - t) \frac{\log T}{2\pi} \,\, \Big| \,\, \xi'(\frac{1}{2} + i \gamma') = 0\right\} \overset{??}\longrightarrow \mathfrak{Si}'_0 \quad
\text{in distribution.} \end{equation}
See Figure~\ref{fig:comp}.
\begin{figure}[h]
\centerline{\includegraphics[trim={0cm 0cm  0cm 0}, scale=.48,angle=0]{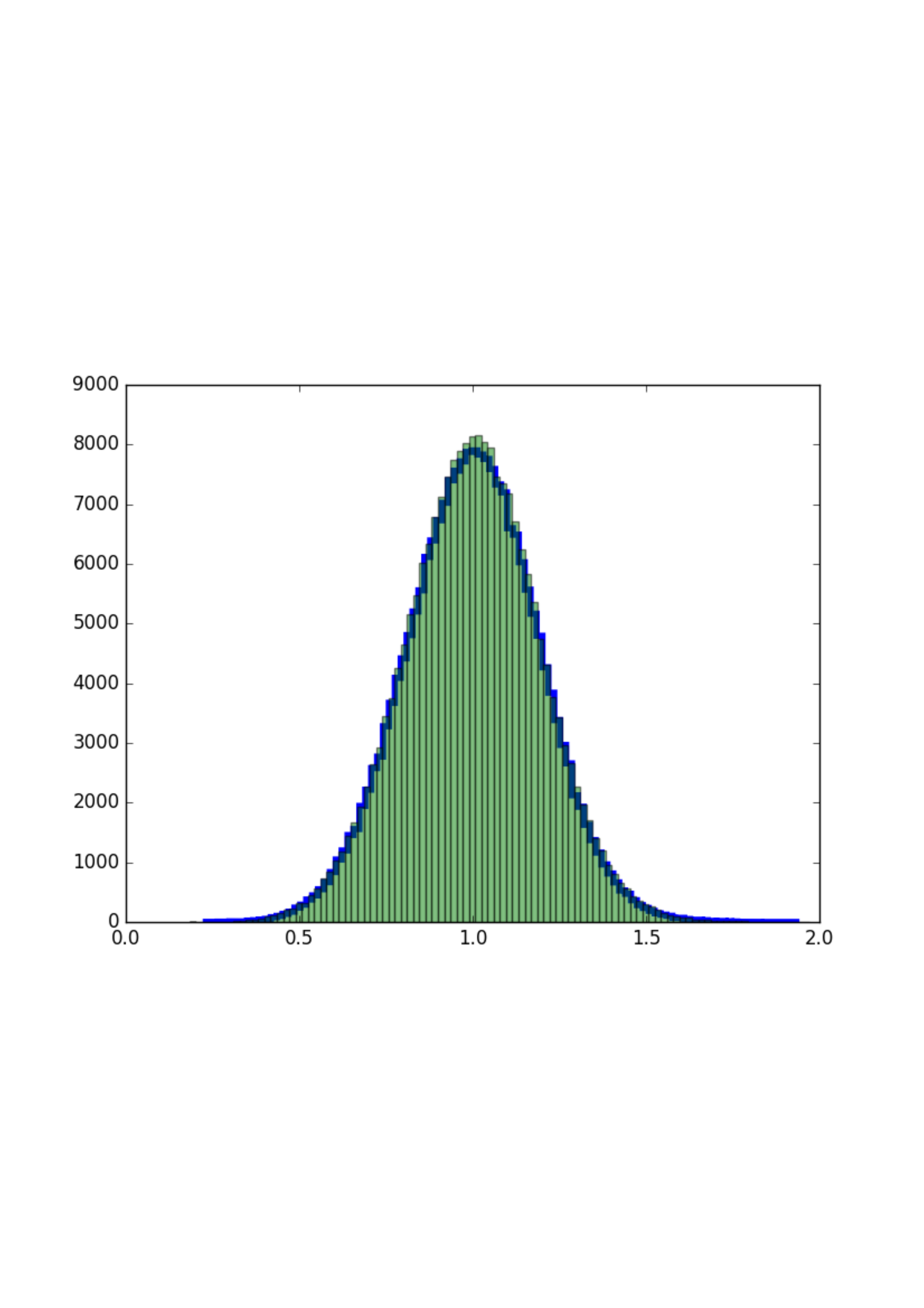}}
\caption{A histogram of spacings between the critical points for GUE$_{300}$ near $E=0$ 
(blue line) and for the $\xi$-function near $\frac12 + 2^{32}i$ (green bars). Data courtesy of Dave Platt.\label{fig:comp}}
\end{figure}

Denote 
\[\begin{split}
\Omega_k(\xi, T, \epsilon) &= 
\left\{ 0 \leq t \leq T \, \Big| \, \# \left[ \gamma \in (t - \frac{2\pi\epsilon}{\log T}, 
t + \frac{2\pi\epsilon}{\log T})~, \,\, \xi(\frac12 + i\gamma) = 0\right] \geq k \right\}\\
 \Omega_k(\xi', T, \epsilon) &= 
\left\{ 0 \leq t \leq T \, \Big| \, \# \left[ \gamma' \in (t - \frac{2\pi\epsilon}{\log T}, 
t + \frac{2\pi\epsilon}{\log T})~, \,\, \xi'(\frac12 + i\gamma') = 0\right] \geq k \right\}
\end{split}\]
From Corollary~\ref{cor:limit} and (\ref{eq:multcor'''}), we obtain 
\begin{cor}\label{cor:xi} Assume the Riemann hypothesis and the multiple correlation conjecture 
(\ref{eq:multcor}). Then
\[ \lim\limits_{T \to \infty} \frac1T \operatorname{mes}(\Omega_k(\xi', T, \epsilon))= (\mathcal c_{k}' + \mathcal o(1)) \epsilon^{(k+1)^2}~. \]
\end{cor}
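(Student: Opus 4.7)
The plan is to combine the two ingredients already in place: the conditional distributional convergence~(\ref{eq:multcor'''}) of the rescaled critical points of $\xi$ to $\mathfrak{Si}'_0$, and the short-interval asymptotic for $\mathfrak{Si}'_0$ supplied by Corollary~\ref{cor:limit}. The Riemann hypothesis and the multiple correlation conjecture enter only through~(\ref{eq:multcor'''}); no further number-theoretic input is needed beyond what already feeds into that statement.

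First I would rewrite the quantity in question as a point-process probability, by viewing $t$ as chosen uniformly at random in $[0,T]$:
\[
\frac{1}{T}\operatorname{mes}(\Omega_k(\xi',T,\epsilon)) = \mathbb{P}\!\left(\#\big[\Xi_T' \cap (-\epsilon,\epsilon)\big] \geq k\right),
\]
where $\Xi_T'$ denotes the random point set on the left-hand side of~(\ref{eq:multcor'''}), i.e.\ the critical-point ordinates of $\xi$ rescaled by $\log T / (2\pi)$ and re-centred at the random point $t$. By~(\ref{eq:multcor'''}), $\Xi_T' \to \mathfrak{Si}'_0$ in distribution as $T \to \infty$ in the vague topology on point processes.

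Next I would upgrade this vague convergence to convergence of the discrete counting probability on the fixed interval $(-\epsilon,\epsilon)$. This is the standard portmanteau step for point processes: $\mathfrak{Si}'_0$ has a continuous one-point intensity (its points interlace with those of the simple sine process $\mathfrak{Si}$, whose distribution is absolutely continuous), and hence it almost surely places no mass on the fixed boundary points $\{\pm\epsilon\}$. Consequently the integer-valued count $\#[\Xi_T' \cap (-\epsilon,\epsilon)]$ converges in distribution to $\#[\mathfrak{Si}'_0 \cap (-\epsilon,\epsilon)]$; since both are integer-valued, this amounts to termwise convergence of the mass functions, yielding
\[
\lim_{T \to \infty} \frac{1}{T}\operatorname{mes}(\Omega_k(\xi',T,\epsilon)) = \mathbb{P}(\Omega_k(\mathfrak{Si}'_0, \epsilon)).
\]

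Finally, Corollary~\ref{cor:limit} applied with $a=0$ evaluates the right-hand side as $(\mathcal c_k' + o(1))\,\epsilon^{(k+1)^2}$ as $\epsilon \to +0$, which is precisely the asserted identity. The only mildly delicate step is the middle one, namely verifying the absence of boundary atoms at $\pm\epsilon$ for $\mathfrak{Si}'_0$ so that vague convergence passes through to the counting event; but this property is inherited from the sine process together with the interlacing structure. Once~(\ref{eq:multcor'''}) and Corollary~\ref{cor:limit} are in hand, the corollary is essentially automatic.
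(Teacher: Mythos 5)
Your proof is correct and follows essentially the same route as the paper: the paper's own proof is simply ``Follows from Theorem~\ref{thm:sine} and Corollary~\ref{xi}'\,'' which is exactly the combination you spell out, with Corollary~\ref{cor:limit} (itself a consequence of Theorem~\ref{thm:sine}) supplying the small-$\epsilon$ asymptotic and the joint convergence in Corollary~\ref{xi}' (equivalently~(\ref{eq:multcor'''})) supplying the $T\to\infty$ limit. The only step you make explicit that the paper leaves tacit is the portmanteau step (no atoms of $\mathfrak{Si}'_0$ at $\pm\epsilon$, which indeed holds by stationarity of $\mathfrak{Si}$ and interlacing), and that is correctly handled.
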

\noindent We also prove the following less conditional result with a similar message:
$k$-tuples of critical points of the $\xi$-function crowding short intervals are mostly
a consequence of $(k+1)$-tuples of zeros crowding slightly larger intervals.
\begin{thm}\label{thm:xi}
Assume the Riemann hypothesis. For any $k \geq 2$, $0 < \epsilon < 1$, $R \geq 5$
\[ \frac1T \operatorname{mes} \left(\Omega_k(\xi', T, \epsilon) \setminus \left(\Omega_{k+1}(\xi, T, 5\epsilon) \cup \Omega_{k+2}(\xi, T, R \epsilon)\right) \right)\leq \frac{C}{e^{ckR}}~.  \]
\end{thm}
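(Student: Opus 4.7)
The plan is to adapt the argument of Theorem~\ref{thm:sine} to the $\xi$-function, using two tools available under the Riemann hypothesis: the Hadamard factorization of $\xi$, and short-interval large-deviation bounds for the zero-counting function. Setting $F(t) = \xi(\tfrac12 + it)$, under RH $F$ is a real entire function with real simple zeros $\{\gamma\}$, and the genus-$1$ Hadamard product together with the functional equation $\xi(s) = \xi(1-s)$ gives
\[
\frac{F'(t)}{F(t)} = \sum_\gamma \frac{1}{t - \gamma} \qquad (\star)
\]
as a principal-value sum. The zeros of $F'$ are precisely the ordinates $\gamma'$ of critical points of $\xi$ on the critical line, and $(\star)$ specialises to $\sum_\gamma 1/(\gamma' - \gamma) = 0$ at each such $\gamma'$.

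Rescale by $\log T/(2\pi)$ so that the mean zero spacing is $1$, and suppose $t_0$ lies in the set to be bounded. Then $F'$ has $k$ zeros $\tilde\gamma_1' < \dots < \tilde\gamma_k'$ in $(-\epsilon, \epsilon)$, interlaced by zeros $\tilde\gamma_0 < \tilde\gamma_1' < \tilde\gamma_1 < \dots < \tilde\gamma_{k-1} < \tilde\gamma_k' < \tilde\gamma_k$ of $F$, with $\tilde\gamma_1, \dots, \tilde\gamma_{k-1}$ automatically in $(-\epsilon, \epsilon)$. Excluding $\Omega_{k+1}(\xi, T, 5\epsilon)$ forces at least one of the extremal zeros $\tilde\gamma_0, \tilde\gamma_k$ outside $(-5\epsilon, 5\epsilon)$; by symmetry I take $\tilde\gamma_k > 5\epsilon$. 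Excluding $\Omega_{k+2}(\xi, T, R\epsilon)$ then leaves at most one zero in $(-R\epsilon, R\epsilon)$ beyond $\tilde\gamma_0, \dots, \tilde\gamma_{k-1}$, and if present it must be $\tilde\gamma_k$. In particular the annulus $(\max(5\epsilon, \tilde\gamma_k), R\epsilon)$ contains no zero, producing a rescaled gap of length $\geq (R-5)\epsilon$ just past the cluster.

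Now apply $(\star)$ at $\tilde\gamma_k'$ and split into local ($|\tilde\gamma| \leq 5\epsilon$: at most $k$ terms, each of magnitude $\gtrsim 1/\epsilon$), middle ($5\epsilon < |\tilde\gamma| \leq R\epsilon$: at most one term of size $O(1/(R\epsilon))$), and tail ($|\tilde\gamma| > R\epsilon$). Using the explicit formula representation of $\xi'/\xi$ together with the bound $S(t_0) = O(\sqrt{\log\log T})$ (valid under RH outside a set of negligible measure), the tail is $O(1)$ on the bulk of $(0, T)$. The identity thus forces the $k$ local terms to nearly cancel, up to $O(1 + 1/(R\epsilon))$. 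Repeating the argument at the other $\tilde\gamma_j'$ and subtracting pairs yields the absolutely convergent identities
\[
\sum_\gamma \frac{1}{(\tilde\gamma_i' - \tilde\gamma)(\tilde\gamma_j' - \tilde\gamma)} = 0, \qquad 1 \leq i < j \leq k,
\]
supplying $k-1$ further algebraic constraints on the local configuration.

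Finally, the measure bound $Ce^{-ckR}$ is obtained by combining these constraints with a short-interval large-deviation estimate: under RH, Selberg's moment bounds for $\log|\zeta|$ together with Littlewood's lemma yield (with a Gaussian-in-$(L-m)$ exponent)
\[
\tfrac{1}{T} \operatorname{mes} \left\{ t_0 : N(t_0 + \tfrac{2\pi L}{\log T}) - N(t_0) \leq m \right\} \ll \exp(-c(L-m)),
\]
so that applied with $L \sim R$ and $m$ bounded, the empty-annulus geometry alone already contributes an exponent $O(R)$. The factor $k$ then arises by iterating this bound against the $k-1$ further identities, each of which carves out a codimension-$1$ slice of the permitted configurations of local zeros. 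The main obstacle will be to quantify this interaction cleanly: one must show that the algebraic cancellations from the Hadamard identities and the entropy of the zero gap combine additively in the exponent, giving the clean bound $e^{-ckR}$ without losing factors in $k$ or $R$ in the constants — this is the real-zero, RH-only analogue of the determinantal computation underlying Theorem~\ref{thm:sine}.
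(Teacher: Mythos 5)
Your opening moves — Hadamard factorization, interlacing, confining the $k$-cluster between two gaps, and isolating the extremal zero $\tilde\gamma_k$ — coincide with the paper's setup and with the combinatorial step (\ref{eq:3opt}) imported from the proof of Theorem~\ref{thm:sine}. After that, however, the proposal diverges from the paper and runs into two substantive gaps.

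First, the large-deviation estimate you invoke,
\[
\tfrac{1}{T}\operatorname{mes}\bigl\{t_0 : N(t_0+\tfrac{2\pi L}{\log T})-N(t_0)\leq m\bigr\}\ll e^{-c(L-m)},
\]
is not a known consequence of RH, and Selberg's moment estimates plus Littlewood's lemma do not produce it. Under RH what one has is Fujii's $L^2$ estimate, quoted in the paper as (\ref{eq:fujii}): the variance of the zero count in a window of (rescaled) length $R$ is $O(\log R)$. That gives only a polynomial Chebyshev tail, not the exponential decay you need; the full chain from the Selberg central-limit picture of $S(t)$ to an exponential deficiency bound for $N$ in short windows is precisely the sort of thing that fails to be unconditional. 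Your bound $S(t_0)=O(\sqrt{\log\log T})$ for typical $t_0$ is also the wrong object: it constrains the \emph{count} of zeros, not the size of $\sum_\gamma 1/(t_0-\gamma)$ outside the window, so it does not directly control the tail of the principal-value sum in $(\star)$.

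Second, the ``$k-1$ further algebraic constraints'' $\sum_\gamma 1/((\tilde\gamma_i'-\tilde\gamma)(\tilde\gamma_j'-\tilde\gamma))=0$ are correct identities, but you give no mechanism by which they translate a codimension count into a factor $k$ in the exponent; as you admit at the end, this is ``the real obstacle.'' This is where the paper does something different: it does not use combinatorial or algebraic constraints at all past (\ref{eq:3opt}). Instead, it observes that on the complement of $\Omega_{k+2}(\xi,T,R\epsilon)$ the truncated near-field sum is pointwise bounded by $O(k/(R\epsilon))$ (equation (\ref{eq:oncompl})), and that the remaining far-field contribution is dominated by $|W_{t,T}(20iR\epsilon)|$, the modulus of the rescaled logarithmic derivative at distance $\sim R\epsilon$ from the critical line. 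The measure bound then comes from the $m$-th moment bound of Rodgers \cite{Rodgers2}, equation (\ref{eq:rod}), which is $C^{m\log m}\delta^{-m}T\log^m T$ for $\zeta'/\zeta$ evaluated at imaginary distance $\delta/\log T$ from the line; after rescaling this gives (\ref{eq:logdermoment1}), and optimizing the Chebyshev exponent $m$ against the threshold $(k-1)/\epsilon$ yields $e^{-ckR}$. Rodgers's bound is the genuinely new RH-only input; without it or an equivalent, the proposal does not close.
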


\mypar{mot1} 
Let us discuss the motivation for these results. The traditional object
 of study in random matrix theory is the joint distribution
of the eigenvalues. 
The local eigenvalue statistics, i.e.\ the study of eigenvalues
on the scale of mean eigenvalue spacing, is of particular interest due to the
robust (universal) nature of the limiting objects.

Recently, the value distribution of the characteristic polynomial of a random
matrix also received significant attention. While the characteristic polynomial
is determined by the eigenvalues, its restriction to an interval depends both
on the eigenvalues inside the interval and those outside it. Therefore the
statistical properties of the characteristic polynomial on the scale of mean 
eigenvalue spacing are not necessarily determined by the local 
eigenvalue statistics.

As one varies the argument over an interval containing many eigenvalues for a given realisation of the random matrix, the value
 of the polynomial shows huge variations by the orders of magnitude. We refer to
 the works \cite{FB,FK14} for a discussion and references, in particular, for a statistical 
 mechanics  perspective on  the absolute value of the characteristic polynomial as a disordered landscape (a Boltzmann weight 
with a log-correlated potential).  
It was found that characteristic polynomials of random
matrices can be  used to model the value distribution of the Riemann
zeta function on the critical line \cite{KS00,HKOC01,GHK}.
 In particular, the statistics related to the global maximum of the modulus of characteristic polynomial has been studied for matrices drawn from the circular ensemble,
 and these results were used to study (on the physical and mathematical
 levels of rigour) the properties of the global maximum of
 $|\zeta(1/2+it)|$ in various intervals \cite{FHK12,FK14,ABH,ABB,PZ,CNT,CNN,Naj,ABBRS}. 
 Parallel questions for the characteristic polynomial of Hermitian
 random matrices were investigated  in \cite{FS16,FL16,FKS}.
 
\medskip\noindent
The sequence of positions of the local maxima (and minima) of the characteristic
polynomial which we study in this paper is  one of the natural characteristics of 
the random landscape. 

\medskip\noindent Second, the zeros and the critical points of the characteristic polynomial form an interlacing pair of sequences. As put forth by Kerov \cite{Kerov_book,Kerov_interl}, such pairs naturally appear in numerous
problems of analysis, probability theory, and representation theory, and their
limiting properties in various asymptotics regimes are of particular importance.

In the recent work \cite{s_interl} we studied the statistical properties of the
zeros and the critical points from the point
of view of the global regime, namely, the fluctuations of linear statistics. In particular,
the fluctuations differ from those of another natural interlacing pair, formed
by the eigenvalues of a random matrix and those of a principal submatrix;
see Erd\H{o}s and Schr\"oder \cite{ES}.

Here we study the joint distribution of the zeros and the critical points in the
local regime, i.e.\ on the scale of the mean spacing. See further Corollary~\ref{aw} and Paragraph~\ref{r.comp}, and also Figure~\ref{fig}.

\medskip\noindent
Finally, the strong repulsion between the critical points is an instance of a general phenomenon: the zeros of the derivative
of a polynomial  with real zeros (or of an analytic function in the Laguerre--P\'olya
class) are more evenly spaced than the zeros of the
original polynomial. In the deterministic setting, this phenomenon goes back
to the work of  Stoyanoff  and M.\ Riesz \cite{St}, see \cite{FR} for a historical discussion.
Theorem~\ref{thm:sine} and its corollaries provide additional examples in
the random setting.

Here we remark that, under repeated differentiation, the zeros become more and more rigid and approach
an arithmetic progression (after proper rescaling and with the right order of limits).
This was established in various settings in \cite{PS,FY,Ki} (of which \cite{Ki}
is applicable to the $\xi$-function), and is probably true for $\Phi(z)$ of (\ref{eq:defphi})
as well.

\mypar{mot2}
The second source of motivation comes from number theory (disclaimer:  there are no new number-theoretic ingredients 
in our arguments). 

The study of the zeros of $\xi'$
goes back to the work of Levinson \cite{Lev2} and Conrey \cite{Con1,Con2}, who
obtained unconditional lower bounds on the fraction of zeros  lying on the critical line  (see \cite{Sel,Lev1}, the more
recent \cite{Feng} and references therein for the corresponding results pertaining to
the zeros of $\xi$).

More recently,
Farmer, Gonek and Lee \cite{FGL} and further Bian \cite{Bian} and Bui \cite{Bui} 
studied the correlations between the zeros of $\xi'$, arguing that a  detailed understanding of the
joint statistics of the zeros of $\xi$ and $\xi'$  may allow to rule out the so-called
Alternative Hypothesis, according to which the spacings between
nearest high-lying zeros of $\xi$ are close to half-integer multiples of the mean
spacing. The main result in \cite{FGL}
asserts that, conditionally on the Riemann  hypothesis,
\begin{equation}\label{eq:fgl}  \frac{1}{N(T)}\!\!\!
\sum_{\substack{0 < \gamma', \tilde\gamma' < T \\ \xi'(\frac12 + i\gamma') = \xi'(\frac12 + i\tilde\gamma') = 0}} \!\!\! \frac{4e^{i\alpha(\gamma'-\tilde\gamma') \log T}}{4 + (\gamma' - \tilde\gamma')^2} \to
|\alpha| - 4|\alpha|^2 + \sum_{k=1}^\infty \frac{(k-1)!}{(2k)!} (2|\alpha|)^{2k+1}
\end{equation}
as $T \to \infty$, for $0 < \alpha < 1$, where
\begin{equation}\label{eq:riem}N(T) = \# \left\{ 0 \leq t \leq T \, \mid \, \xi(1/2 + it) = 0 \right\}  = \frac{T}{2\pi} \log T \, (1 + \mathcal o(1))~.\end{equation}
For comparison,  Montgomery showed \cite{Mont} that (conditionally on the Riemann hypothesis)
for $0 < \alpha < 1$
\begin{equation}\label{eq:mont} 
\frac{1}{N(T)}
\sum_{\substack{0 < \gamma, \tilde\gamma < T \\ \xi(\frac12 + i\gamma) = \xi(\frac12 + i\tilde\gamma) = 0}} \frac{4e^{i\alpha(\gamma-\tilde\gamma) \log T}}{4 + (\gamma - \tilde\gamma)^2} \to
|\alpha|~, \quad T \to \infty~.
\end{equation}
The $\alpha \to 0$ asymptotics of form factors on the left-hand side of (\ref{eq:fgl}) 
and (\ref{eq:mont}) capture the behaviour of the spacings between zeros on long scales. The short scale behaviour roughly corresponds to the $\alpha \to \infty$ asymptotics
of the form factor. 

The pair correlation conjecture of Montgomery \cite{Mont} states that for $\alpha \geq 1$ the limit
of the left-hand side of (\ref{eq:mont}) is equal to $1$, similarly to the form
factor of the sine process. Further, Hejhal \cite{Hej}
and  Rudnick and Sarnak \cite{RS} extended the result (\ref{eq:mont}) to higher
correlations, which, together with the arguments of Bogomolny--Keating \cite{BK1,BK2}, led to the conjecture \cite{BK1,BK2,RS} that the full 
asymptotic distribution of the zeros of $\xi$ on the scale of mean spacing
is described by the sine process of random matrix theory, (\ref{eq:multcor}). 

On the other hand, a conjectural 
description of the full limiting distribution of the critical points seems to have been 
missing. Our results (Corollary~\ref{xi} below) provide such a conjectural
description: it turns out that (\ref{eq:multcor}) formally implies that
\begin{equation}\label{eq:multcor'} \left\{ (\gamma' - t) \frac{\log T}{2\pi} \,\, \Big| \,\, \xi'(\frac{1}{2} + i \gamma') = 0\right\} \overset{??}\longrightarrow \mathfrak{Si}'_0 \quad
\text{in distribution.} \end{equation}
Then, Corollary~\ref{cor:xi} provides a conditional description of the left tail
of the spacing distribution, whereas Theorem~\ref{thm:xi} provides some less
conditional information. 

\medskip\noindent
Here we also mention the works  devoted to the zeros of $\zeta'$, particularly,
\cite{LM,Mez1,DFFHMP,Lester} (and references therein). The zeros of $\zeta'$ do not lie on the critical line, and are therefore
thematically more distant from the current study; their counterparts in  random matrix setting (in a sense made precise in the aforementioned works) are the
critical points of the characteristic polynomial of a circular ensemble.

\mypar{tight} Let us briefly discuss the derivation of Corollaries~\ref{cor:wig} and \ref{cor:xi}
from Theorem~\ref{thm:sine}; see Sections~\ref{aw} and \ref{xi} for the precise
definition and proofs. 
The collection of all the critical points of the characteristic polynomial is determined by
the collection of all the eigenvalues. However, it is not a priori clear whether this relation
persists in the local limit regime: that is, whether the conditional distribution of the critical points in an interval of length, say, $(5 \times \text{mean spacing})$, conditioned
on the eigenvalues outside a concentric interval of length
$(R \times \text{mean spacing})$ degenerates in the limit $R \to \infty$, uniformly
in the matrix size. 

Technically, (\ref{eq:universality}) amounts to the convergence in distribution of linear
statistics of the form
\begin{equation}\label{eq:linC0}\sum_{j=1}^N f\left((\lambda_{j,N} - E) \frac{N\sqrt{4-E^2}}{2\pi} \right)~, \quad  
f \in C_0(\mathbb R)\end{equation}
to the corresponding statistics of the sine process. It 
is possible to extend this to other integrable test functions satisfying mild
regularity conditions. On the other hand, the critical points are controlled by
linear statistics corresponding to functions of the form $f(\lambda) = \frac{1}{\lambda - z}$,
$z \in \mathbb C \setminus \mathbb R$; the asymptotics of such linear 
statistics is not a formal consequence of (\ref{eq:universality}).

Recently, Aizenman and Warzel \cite{AW} put forth a general condition  which
ensures that (\ref{eq:universality}) can be upgraded to the convergence of such linear statistics. In the
setting of Wigner matrices, they verified the condition using the local semicircle law of Erd\H{o}s--Schlein--Yau \cite{ESY} and the universality results of Erd\H{o}s--Yau, Tao--Vu and coworkers \cite{EY,TV}, and obtained:
\begin{quotedprop}[\ref{aw}' (Aizenman--Warzel)]
Let $P_N(z) = \det(H_N/\sqrt{N} - z)$, where $H_N$ is a sequence of complex Wigner matrices satisfying the assumptions listed in Section~\ref{intr}. Then for $E \in (-2, 2)$
\begin{align}\label{eq:awintr}
\frac{2\pi}{N \sqrt{4-E^2}} \frac{P_N'\left(E + \frac{2\pi z}{N \sqrt{4-E^2}}\right)}{P_N\left(E + \frac{2\pi z}{N \sqrt{4-E^2}}\right)} &\overset{\text{distr}}\longrightarrow \frac{\Phi'(z)}{\Phi(z)} + \frac{\pi E z}{\sqrt{4 - E^2}}~, &N \to \infty \\\label{eq:awintr2}
\frac{P_N\left(E + \frac{2\pi z}{N \sqrt{4-E^2}}\right)}{P_N(E)}&\overset{\text{distr}}\longrightarrow \Phi(z) \exp\left[  \frac{\pi E z}{\sqrt{4 - E^2}}\right]~,  &N \to \infty
\end{align}
with respect to the topology of locally uniform convergence on $\mathbb C \setminus \mathbb R$ (in the first relation) and of locally uniform convergence on $\mathbb C$
(in the second relation).
\end{quotedprop}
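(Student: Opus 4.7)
The plan is to exploit the Hadamard factorisation of the characteristic polynomial and to separate the scale of the mean spacing (where bulk universality holds) from the global scale (where the semicircle law holds).

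Setting $\eta = \frac{2\pi z}{N\sqrt{4-E^2}}$ and introducing the rescaled eigenvalues $\zeta_{j,N} = (\lambda_{j,N} - E)\,\frac{N\sqrt{4-E^2}}{2\pi}$, the ratio in (\ref{eq:awintr2}) equals
\[
\frac{P_N(E+\eta)}{P_N(E)} = \prod_{j=1}^N \left(1 - \frac{z}{\zeta_{j,N}}\right) = \Biggl[\,\prod_j \Bigl(1 - \tfrac{z}{\zeta_{j,N}}\Bigr)\,e^{z/\zeta_{j,N}}\,\Biggr]\cdot\exp\!\left(-z\sum_j \frac{1}{\zeta_{j,N}}\right).
\]
The first step is to treat the bracketed product. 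Each factor differs from $1$ by $O(|z/\zeta_{j,N}|^2)$, so the product is absolutely convergent provided $\sum_j |\zeta_{j,N}|^{-2}$ is bounded; this is ensured by rigidity of eigenvalues from the local semicircle law \cite{ESY}. Combining (\ref{eq:universality}) inside a window $\{|\zeta|\leq R\}$ with tail estimates for $|\zeta|>R$ and letting $R\to\infty$ identifies the limit as $\prod_{x\in \mathfrak{Si}}(1-z/x)\,e^{z/x}$, which coincides with $\Phi(z)$ because the sine process is symmetric under $x\mapsto -x$, so that the symmetric principal value of $\sum 1/x$ absorbed by the primary factors vanishes.

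The main obstacle is the exponential prefactor, i.e.\ the limit of the conditionally convergent sum $\sum_j 1/\zeta_{j,N}$. Rewriting
\[
\sum_j \frac{1}{\zeta_{j,N}} = \frac{2\pi}{N\sqrt{4-E^2}}\sum_j \frac{1}{\lambda_{j,N}-E},
\]
I would invoke a quantitative version of the convergence of the empirical Stieltjes transform to its semicircle limit (the local law and bulk rigidity for Wigner matrices), yielding
\[
\frac{1}{N}\sum_j \frac{1}{\lambda_{j,N}-E} \longrightarrow \mathrm{p.v.}\!\int\frac{\rho_{sc}(x)\,dx}{x-E} = -\frac{E}{2},
\]
in probability, with the error uniform down to the scale of a single mean spacing of $E$. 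This produces $\sum_j 1/\zeta_{j,N} \to -\pi E/\sqrt{4-E^2}$ and hence (\ref{eq:awintr2}).

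The first relation (\ref{eq:awintr}) then follows by passing to the logarithmic derivative: locally uniform convergence of the analytic functions in (\ref{eq:awintr2}) implies locally uniform convergence of their logarithmic derivatives away from the zero set of the limit, which almost surely coincides with the real line. The critical technical input, supplied by \cite{AW} via \cite{ESY,EY,TV}, is a form of the local law with enough precision to control the contribution of eigenvalues at all mesoscopic scales between $N^{-1}$ and $1$ to the conditionally convergent sum, and in particular to handle the rare eigenvalues landing within one mean spacing of $E$.
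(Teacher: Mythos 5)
Your strategy---prove the ratio convergence~(\ref{eq:awintr2}) directly via a Hadamard-type factorisation and then differentiate to obtain~(\ref{eq:awintr})---goes in the opposite direction from the paper (which cites \cite[Cor.~6.5]{AW} for~(\ref{eq:awintr}) and then integrates). That order is not wrong in itself, but the execution has a genuine gap at exactly the point you flagged as ``the main obstacle.''

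The error is that both legs of your factorisation are forced to be deterministic in the limit, when in fact each carries an $O(1)$ \emph{random} contribution that you have dropped. Concretely, $\Phi$ in~(\ref{eq:defphi}) is defined by the \emph{symmetric cutoff} product, so
\[
\Phi(z)\;=\;\lim_{R\to\infty}\prod_{|x|<R}(1-z/x)\;=\;\Bigl[\,\prod_{x\in\mathfrak{Si}}(1-z/x)\,e^{z/x}\Bigr]\cdot e^{-z\,W(0)},
\]
where $W(0)=\lim_{R\to\infty}\sum_{|x|<R}1/x$ is a nondegenerate real random variable (it is precisely the Cauchy-distributed quantity that is the centrepiece of \cite{AW}). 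Distributional symmetry of the sine process under $x\mapsto -x$ gives $\mathbb{E}\,W(0)=0$ formally, but does \emph{not} make $W(0)$ vanish pathwise; so the bracketed regularised product is $\Phi(z)\,e^{z\,W(0)}$, not $\Phi(z)$. Correspondingly, the conditionally convergent prefactor sum does \emph{not} satisfy $\sum_j 1/\zeta_{j,N}\to -\pi E/\sqrt{4-E^2}$ in probability: the eigenvalues within $O(1/N)$ of $E$, which are order-one after rescaling, contribute a random $O(1)$ term that converges precisely to $W(0)$, on top of the deterministic principal-value piece. The two random pieces cancel in the final product, which is why your answer comes out right, but the argument you wrote proves each factor converges to the wrong (deterministic) limit. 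Identifying and controlling this joint random fluctuation is exactly what the Aizenman--Warzel theory of random Nevanlinna functions supplies (the pointwise criterion of convergence on $\mathbb{C}\setminus\mathbb{R}$ plus tightness of $W(0)$), and it is this input---not a deterministic local law---that has to carry the argument. Once that is in place, one can indeed go either direction between~(\ref{eq:awintr}) and~(\ref{eq:awintr2}), but the differentiation step you propose also deserves a sentence: locally uniform convergence of the ratios in~(\ref{eq:awintr2}) on compacts of $\mathbb{C}$ does give locally uniform convergence of the logarithmic derivatives on compacts of $\mathbb{C}\setminus\mathbb{R}$ because the limiting zeros a.s.\ lie on $\mathbb{R}$, and that part is fine; what is missing is the probabilistic control of $W(0)$ before differentiating.
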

The second relation follows from the first one by (careful) integration.\footnote{some care
is required to upgrade  uniform convergence on compact subsets of $\mathbb C \setminus \mathbb R$ to uniform convergence  on compact subsets of $\mathbb C$ not containing the poles of the limiting meromorphic function.}.
We mention that Chhaibi, Najnudel and Nikeghbali established a counterpart of (\ref{eq:awintr})--(\ref{eq:awintr2}) for the Circular Unitary Ensemble;
see further Paragraph~\ref{r.oth}.

\medskip\noindent
We show that a similar result holds for the Riemann $\xi$-function, conditionally on the Riemann
hypothesis; see Proposition~\ref{xi}. Somewhat similar
statements have been proved for the  $\zeta$-function, cf.\ \cite{FGLL,GGM,Rodgers}. 
Here we quote the following corollary (the non-trivial statement is $\Longrightarrow$):
\begin{quotedcor}[\ref{xi}'] Conditionally on the Riemann hypothesis, the multiple correlation conjecture (\ref{eq:multcor}) is equivalent to each of the
following relations: 
\begin{align}
\frac{2\pi i}{\log T}\frac{\xi'(\frac12 + i(t + \frac{2\pi z}{\log T}))}{\xi(\frac12 + i(t + \frac{2\pi z}{\log T}))} &\overset{\text{distr}}\longrightarrow \frac{\Phi'(z)}{\Phi(z)}~,  &T \to \infty \\
\label{eq:xiconvintr} 
\frac{\xi(\frac12 + i(t + \frac{2\pi z}{\log T}))}{\xi(\frac12 + it)} &\overset{\text{distr}}\longrightarrow \Phi(z)~,  &T \to \infty
\end{align}
when $t$ is chosen uniformly at random in $[0, T]$.
\end{quotedcor}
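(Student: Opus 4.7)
The plan is to parallel the Aizenman--Warzel treatment of Wigner matrices (Proposition~\ref{aw}$'$), with the Hadamard factorisation of $\xi$ taking the role of the eigenvalue factorisation of the characteristic polynomial. Under the Riemann hypothesis each non-trivial zero is $\rho=\tfrac12+i\gamma_j$ with $\gamma_j\in\mathbb R$, and the standard Hadamard-product logarithmic derivative
$$\frac{\xi'(s)}{\xi(s)} = B + \sum_{\rho}\left(\frac{1}{s-\rho} + \frac{1}{\rho}\right)$$
becomes, after substituting $s=\tfrac12+i(t+\tfrac{2\pi z}{\log T})$ and multiplying by $\tfrac{2\pi i}{\log T}$, a sum of the form $\sum_j\tfrac{1}{z-x_j}$ (modulo a regularising counter-term), where $x_j=(\gamma_j-t)\logT$ are exactly the rescaled zeros appearing in the multiple correlation conjecture~\eqref{eq:multcor}. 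On the other side, \eqref{eq:defphi} yields $\Phi'(z)/\Phi(z)=\lim_R\sum_{x\in\mathfrak{Si}\cap(-R,R)}(z-x)^{-1}$. Both displayed relations therefore reduce to comparing the same regularised Cauchy-type sum over the rescaled zero sequence and over $\mathfrak{Si}$.

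For the forward direction (the multiple correlation conjecture implies either displayed convergence) the strategy is a truncation argument. Fixing a compact $K\subset\mathbb C\setminus\mathbb R$ and a large $R$, the partial sum $\sum_{|x_j|<R}(z-x_j)^{-1}$ is a bounded continuous linear statistic of the rescaled point process restricted to $[-R,R]$, so \eqref{eq:multcor} gives in-distribution convergence to the corresponding statistic for $\mathfrak{Si}$. The tail $|x_j|>R$, paired with its regularising counter-term, is controlled uniformly in $T$ by the Riemann--von Mangoldt counting of zeros in long intervals, together with a second-moment input of the kind provided by Montgomery's \eqref{eq:mont}, yielding tightness in the locally uniform topology on $\mathbb C\setminus\mathbb R$; Montel's theorem then upgrades pointwise convergence to the asserted locally uniform one. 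This gives the first displayed convergence; the second follows by integrating and exponentiating, normalised so that both sides equal $1$ at $z=0$ (valid almost surely in $t$, since $\xi(\tfrac12+it)\neq 0$ for almost every $t$).

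For the reverse direction, Hurwitz's theorem applied to locally uniform convergence in \eqref{eq:xiconvintr} on all of $\mathbb C$ forces the zero set of the left-hand side---exactly $\{x_j\}$---to converge in distribution to the zero set of $\Phi$, which by \eqref{eq:defphi} is $\mathfrak{Si}$; this recovers \eqref{eq:multcor}. If instead only the first relation is given, the same conclusion is obtained via Stieltjes inversion, reading off the real-line poles of the limit as a point process with unit residues. The mutual equivalence of the two displayed convergences is immediate one way (differentiation of locally uniformly convergent analytic functions) and requires integration along a path in $\mathbb C\setminus\mathbb R$ the other way, with an analogous normalisation at a base point where $\Phi$ is non-vanishing.

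I expect the main obstacle to be the tail estimate in the forward direction. The multiple correlation conjecture \eqref{eq:multcor} is phrased for $C_c$ test functions, but the Cauchy kernel $(z-\cdot)^{-1}$ is neither compactly supported nor in $L^1$, so convergence of linear statistics against it does not follow formally. The Aizenman--Warzel argument handles this in the Wigner setting via the local semicircle law; here the unconditional Riemann--von Mangoldt formula plays the analogous role of providing uniform control on zero counts in long intervals, which, combined with a second-moment bound for the regularised sum, yields the required tightness.
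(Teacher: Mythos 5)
Your proposal follows essentially the same route as the paper: write $\xi'/\xi$ via the Hadamard product as a regularised Cauchy transform of the rescaled zeros, then control the tail using a second-moment bound which ultimately rests on Montgomery's form factor (the paper uses Montgomery's $F(\alpha,T)$ together with the unconditional Goldston--Gonek bound $\int_x^{x+1}F\,d\alpha\leq C$ to get exactly the variance estimate you envisage), and deduce the second displayed relation by integration and the reverse implications by the continuity of the map sending a meromorphic Nevanlinna function to its pole/level set. The paper packages the truncation-and-tightness step as an application of the general Aizenman--Warzel criterion \cite[Theorem~6.1]{AW}, but the technical content is the same mean and variance calculation you describe.

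One ingredient your sketch does not supply: the first-moment identity
\[
\lim_{T\to\infty}\frac{1}{T}\int_0^T W_{t,T}(iR)\,dt = i\pi
\]
is not a consequence of Riemann--von Mangoldt counting alone. In the paper this is proved by a contour integral of $\xi'/\xi$ around a thin rectangle straddling the critical line, and the key step is the functional equation $\xi(1-s)=\xi(s)$, which forces the contributions of the two vertical sides to be equal (and the residue theorem then identifies the total with $2\pi i\,N(T')$). If you try to carry out your plan by hand, you will find that the counter-terms from the Hadamard product do not produce the correct constant $i\pi$ without this symmetry input; Riemann--von Mangoldt gives the right order of growth but not the correct centring. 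This is the analogue, for $\xi$, of the deterministic shift $-\pi E/\sqrt{4-E^2}$ appearing in~(\ref{eq:convW}) for Wigner matrices, and it is what makes $W_T\to W$ converge to the canonical Nevanlinna function with $W(i\infty)=i\pi$ rather than to some translate of it.
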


The combination of Theorem~\ref{thm:sine} with Proposition~\ref{aw} and Corollary~\ref{xi} implies Corollaries~\ref{cor:wig} and \ref{cor:xi}. Roughly speaking, the 
asymptotics of the statistics which depend on ratios of
the characteristic polynomial [or the $\xi$-function], for example, the joint distribution of the zeros of the first
$k$ derivatives, are determined by the sine process.

Here we remark that the fluctuations of linear statistics corresponding to functions
such as $f(\lambda) = \log(\lambda - z)$  contain a component that depends on the eigenvalues [or zeros] outside the microscopic window. This component is
not universal, and, for the $\xi$-function, contains an arithmetic piece; see Gonek, Hughes and Keating \cite{GHK} and references therein. Therefore the limiting value
distribution of the characteristic polynomial differs from that of the $\xi$-function
(for any deterministic regularisation).

\section{Convergence of random analytic functions}
\mypar{intr}
Let $H_N = (H(i,j))_{i,j=1}^N$ be a complex Wigner matrix, which for us is a Hermitian random matrix such that
\begin{enumerate}
\renewcommand{\theenumi}{\alph{enumi}}
\item $\left\{ (\Re H(i,j))_{i < j}, (\Im H(i,j))_{i < j}, (H(i,i))_i\right\}$ are independent random variables;
\item $ (\Re H(i,j))_{i < j}, (\Im H(i,j))_{i < j}$ are identically distributed, and
\[ \mathbb{E} H(i,j)=0~, \,\,\mathbb{E} |H(i,j)|^2 = 1~, \,\, \mathbb{E} \exp(\delta |H(i,j)|^2) < \infty \]
for some $\delta > 0$;
\item $(H(i,i))_i$ are identically distributed, and
\[ \mathbb{E} H(i,i)=0~, \quad \mathbb{E} H(i,i)^2  < \infty~, \quad  \mathbb{E} \exp(\delta |H(i,i)|^2) < \infty~. \]
\end{enumerate}
The main example is the Gaussian Unitary Ensemble (GUE), in which the joint probability density of the matrix elements of $H$ is proportional to $\exp(- \frac12 \tr H^2)$.

\medskip\noindent 
Denote by $(\lambda_{j,N})_{j=1}^N$ the eigenvalues of $H/\sqrt{N}$.
The global statistics of
the eigenvalues is described by Wigner's law:
\begin{equation}\label{eq:wig}
\frac{1}{N} \sum_{j=1}^N \delta(E- \lambda_{j,N}) {\longrightarrow} \rho(E) dE
\end{equation}
weakly in distribution, where $\rho(E) =  \frac{1}{2\pi} \sqrt{(4-E^2)_+}$
is the semicircular density. Due to the interlacing between the critical points $\lambda_{j,N}'$ and the zeros $\lambda_{j,N}$ of $P_N(\lambda) = \det (H_N / \sqrt N - \lambda)$, one also has:
\[ \frac{1}{N-1} \sum_{j=1}^{N-1} \delta(E- \lambda'_{j,N}) {\longrightarrow} \rho(E) dE~.\]

\medskip\noindent
The local  statistics of $\lambda_{j,N}$ are described by
the sine point process $\mathfrak{Si}$ defined in (\ref{eq:defsine}) 
(see e.g.\ \cite{Sosh} for general properties of determinantal point processes):  for any $E \in (-2,2)$ one has the convergence in distribution:
\begin{equation}\label{eq:tosine}
\sum_j \delta\left(u - (\lambda_{j,N} - E) N \rho(E)\right) \longrightarrow \sum_{x \in \mathfrak{Si}} \delta(u-x)~, \quad N \to \infty~.
\end{equation}
This result was proved in the 1960-s for the Gaussian Unitary Ensemble (the eigenvalues
of which form a determinantal point process); see \cite{Mehta}.  In a series of
works by Erd\H{o}s--Yau, Tao--Vu, and coworkers, (\ref{eq:tosine}) was  generalised to  Wigner matrices satisfying assumptions such as a.--c.\ above; see \ \cite{EY,TV} and references therein.

The correlation conjecture of Montgomery \cite{Mont} in the extended version of
Rudnick--Sarnak \cite{RS} and Bogo\-mol\-ny-Keating \cite{BK1,BK2}
asserts that a  similar statement holds for the non-trivial  zeros of the $\zeta$-function:
\begin{equation}\label{eq:tosinezeta}
\sum_{\xi(\frac12 + i\gamma)=0}\delta\left(u - (\gamma - t) \frac{\log T}{2\pi}\right) \overset{??}\longrightarrow \sum_{x \in \mathfrak{Si}} \delta(u-x)~, \quad T \to \infty
\end{equation}
in distribution, when $t$ is uniformly chosen from $[0, T]$.

\medskip\noindent
The relations (\ref{eq:tosine}) and (\ref{eq:tosinezeta}) mean that
\begin{align}\label{eq:mean1} \sum_j f\left(\lambda_{j,N} - E) N \rho(E)\right) \longrightarrow \sum_{x \in \mathfrak{Si}} f(x)\\
\label{eq:mean2} \sum_{\xi(\frac12 + i\gamma)=0}f\left((\gamma - t) \frac{\log T}{2\pi}\right) \overset{??}\longrightarrow \sum_{x \in \mathfrak{Si}} f(x)\end{align}
in distribution for continuous test functions $f$ of compact support. It is possible to 
extend this to integrable test functions satisfying mild regularity conditions. 
On the other hand, going beyond integrable
functions requires additional information about the zeros lying far
away from the microscopic window. It turns out that the second
relation, (\ref{eq:mean2}) is (conditionally) valid for test functions  $f(x) = 1/(x-z)$, if the sums are properly
regularised, whereas the first relation, (\ref{eq:mean1}) requires a deterministic correction depending on
$E$; see Sections~\ref{xi} and \ref{aw} (relying on the works \cite{GGM} and \cite{AW}, respectively). These properties imply that the critical points depend
quasi-locally, so to speak, on the zeros / eigenvalues.

\mypar{aw}
We recall the construction of Aizenman--Warzel \cite{AW}.  Recall that a function $w : \mathbb C \setminus \mathbb R \to \mathbb C$ belongs to the Nevanlinna [= Herglotz = Pick] class ($w \in \mathcal R$) if it is analytic and
\[ \overline{w(z)} = w(\bar z)~, \quad \frac{\Im w(z)}{\Im z} > 0~. \]
The class $\mathcal R$ is equipped with the topology of pointwise convergence
on compact subsets of $\mathbb C \setminus \mathbb R$.

\smallskip\noindent
Denote
\begin{equation}\label{eq:w}
W (z) = \lim_{R \to \infty} \sum_{x \in \mathfrak{Si}} \left[ \frac{1}{x-z} - \frac{1}{x-iR} \right] + i\pi =
\lim_{R \to \infty} \sum_{x \in \mathfrak{Si} \cap (-R,R)} \frac{1}{x - z}~.
\end{equation}
The two limits exist and coincide according to a general criterion of \cite{AW}, and $W(z)$
is a random element of the Nevanlinna class. Also note that $-W(z)$ is the logarithmic
derivative of the function $\Phi(z)$ from (\ref{eq:defphi}).

As before, let $P_N(z) = \det(H_N/\sqrt{N} - z)$ be the characteristic polynomial of $H_N/\sqrt{N}$, and denote
\[ W_N(z; E) =  - \frac{1}{N \rho(E)} \frac{p_N'\left(E + \frac{z}{N\rho(E)}\right)}{p_N\left(E + \frac{z}{N\rho(E)}\right)} = \sum_j \frac{1}{(\lambda_{j,N}-E)N\rho(E)-z}~. \]
\begin{quotedprop}[\ref{aw} (Aizenman--Warzel)]
For $|E|<2$,
\begin{align}\label{eq:convW}
W_N(z; E) \,\,&\overset{\operatorname{distr}}\longrightarrow \,\,W(z) - \frac{\pi E}{\sqrt{4-E^2}}\\
\frac{P_N\left(E + \frac{2\pi z}{N \sqrt{4-E^2}}\right)}{P_N(E)} \,\,&\overset{\operatorname{distr}}\longrightarrow \,\,\Phi(z) \exp\left[\frac{\pi E z}{\sqrt{4 - E^2}}\right]~.\end{align}
\end{quotedprop}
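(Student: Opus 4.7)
Following Aizenman--Warzel \cite{AW}, the plan is to first establish the convergence of $W_N(z;E)$ for each fixed $z\in\mathbb{C}\setminus\mathbb{R}$, upgrade it to locally uniform convergence using compactness in the Nevanlinna class $\mathcal{R}$, and then derive the statement about $P_N$ by integration in $z$.

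For the pointwise convergence, I decompose $W_N(z;E)=L_N^R(z)+F_N^R(z)$ where, writing $u_j=(\lambda_{j,N}-E)N\rho(E)$,
\[ L_N^R(z) = \sum_{|u_j|<R}\frac{1}{u_j-z}, \qquad F_N^R(z) = \sum_{|u_j|\geq R}\frac{1}{u_j-z}. \]
By the sine-process convergence \eqref{eq:tosine}, for each fixed $R>0$ (chosen outside the almost-surely countable exceptional set of sine-process accumulation radii), $L_N^R(z)\to \sum_{x\in\mathfrak{Si}\cap(-R,R)}1/(x-z)$ in distribution. For the far part, rewriting
\[ F_N^R(z) = \frac{1}{N\rho(E)}\!\!\!\sum_{|\lambda_{j,N}-E|\geq R/(N\rho(E))}\!\!\!\frac{1}{\lambda_{j,N}-E-z/(N\rho(E))}, \]
one expects, from Wigner's law \eqref{eq:wig} combined with the local semicircle law of Erd\H{o}s--Schlein--Yau \cite{ESY}, that $F_N^R(z)$ concentrates at its mean and that the mean tends, as $N\to\infty$ followed by $R\to\infty$, to the principal-value Stieltjes transform of $\rho$ at $E$ divided by $\rho(E)$: explicitly, $\mathrm{p.v.}\int \rho(x)/(x-E)\,dx = -E/2$, so $F_N^R(z)\to -E/(2\rho(E)) = -\pi E/\sqrt{4-E^2}$ after exchanging limits. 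Combining with the definition (\ref{eq:w}) of $W(z)$ as the (conditionally convergent) symmetric regularization of $\sum_{x\in\mathfrak{Si}}1/(x-z)$, one obtains $W_N(z;E)\to W(z)-\pi E/\sqrt{4-E^2}$.

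Both $W_N(\cdot;E)$ and the limit lie in the Nevanlinna class $\mathcal{R}$, which forms a normal family; pointwise distributional convergence on any subset of $\mathbb{C}\setminus\mathbb{R}$ with an accumulation point then promotes to convergence in distribution with respect to locally uniform convergence on $\mathbb{C}\setminus\mathbb{R}$. For the second relation, I integrate the identity
\[ \log\frac{P_N(E+z/(N\rho(E)))}{P_N(E)} = -\int_0^z W_N(w;E)\,dw \]
along a path in $\mathbb{C}\setminus\mathbb{R}$, with the result promoted to all of $\mathbb{C}$ by meromorphicity and Hurwitz's theorem (the poles and zeros of the limit all occur at real points of $\mathfrak{Si}$ and $\mathfrak{Si}'_{a}$, so locally uniform convergence across $\mathbb{R}$ away from these poles follows from the convergence on $\mathbb{C}\setminus\mathbb{R}$ by contour deformation). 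Since $-\int_0^z W(w)\,dw = \log\Phi(z)$ (using $\Phi(0)=1$ and that $-W=\Phi'/\Phi$), the exponentiated limit is $\Phi(z)\exp(\pi Ez/\sqrt{4-E^2})$, and the identity $2\pi z/(N\sqrt{4-E^2}) = z/(N\rho(E))$ matches the stated normalisation.

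The main obstacle is the exchange of limits $N\to\infty$ and $R\to\infty$ in the far part $F_N^R$: the summand $1/(u-z)$ is only conditionally summable at infinity, so one cannot appeal to the global Wigner law alone applied to a fixed test function. What is needed is the pointwise rigidity estimate $|\lambda_{j,N}-\gamma_{j,N}|\leq N^{-1+\delta}$ at the classical locations $\gamma_{j,N}$, as provided by the local semicircle law \cite{ESY}, which is enough to control both the expectation and the fluctuations of $F_N^R(z)$ uniformly in $N\geq N_0(R)$ and to match them with the principal-value integral. This is precisely the role played by the Aizenman--Warzel criterion, which packages the required tail control into a single convergence-in-$\mathcal{R}$ statement.
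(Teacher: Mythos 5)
Your proposal is essentially correct and follows the same route as the paper, which establishes the first convergence by citing Aizenman--Warzel \cite[Corollary 6.5]{AW} (whose argument is precisely the near/far decomposition you sketch, with the far part contributing the deterministic shift $-\pi E/\sqrt{4-E^2}$ and rigidity from the local semicircle law \cite{ESY} controlling the exchange of the $N\to\infty$ and $R\to\infty$ limits) and obtains the second relation by careful integration from $0$ to $z$. The subtleties you flag --- the conditional summability of the far part, the Nevanlinna normal-family argument promoting pointwise to locally uniform convergence, and the upgrade from $\mathbb{C}\setminus\mathbb{R}$ to compact subsets of $\mathbb{C}$ avoiding the zeros of $\Phi$ --- are exactly the ones the paper acknowledges in its footnote about ``careful'' integration.
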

\begin{proof}
The first statement is proved in  \cite[Corollary 6.5]{AW}, their argument relies on the results obtained in the works \cite{ESY,EY,TV} on
the local eigenvalue statistics of Wigner matrices, and on the general
theory of random Nevanlinna functions which was developed in \cite{AW}.
The second statement  follows from the first one by (carefully)
integrating from $0$ to $z$.
\end{proof}

Denote by $w^{-1}(a)$ the collection of solutions of $w(z) = a$. Observe that the map $w \mapsto w^{-1}(a)$ from $\mathcal R \cap \{\text{meromorphic functions}\}$ to locally
finite \mbox{(multi-)}subsets of $\mathbb R$  is continuous. From this observation and (\ref{eq:convW}) we deduce:
\begin{namedcor}
 Let $(H_N)$ be a sequence of random matrices satisfying the assumptions listed in Section~\ref{intr}. Then for any $E \in (-2,2)$
\begin{equation}\label{eq:conv}\begin{split}
&\left( \sum_j \delta\left(u - (\lambda_{j,N} - E) N \rho(E)\right)~, \sum_j \delta\left(u - (\lambda_{j,N}' - E) N \rho(E)\right)\right) \\
&\quad\longrightarrow  \left(W^{-1}(\infty), \, W^{-1}(-a)\right)
= \left( \mathfrak{Si}, \mathfrak{Si}'_{a}\right)
 \end{split}\end{equation}
 in distribution, where $a = -\frac{\pi E}{\sqrt{4-E^2}}$.
\end{namedcor}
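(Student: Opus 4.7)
The plan is to realise the rescaled eigenvalue and critical-point sequences as preimages of $\infty$ and $0$ under the meromorphic Nevanlinna function $W_N(\,\cdot\,;E)$, and then transport the convergence from Proposition~\ref{aw} through the continuous map $w\mapsto (w^{-1}(\infty),w^{-1}(0))$.

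First I would observe from the partial-fraction formula
\[ W_N(z;E) = \sum_j \frac{1}{(\lambda_{j,N}-E)N\rho(E) - z} \]
that $W_N(\,\cdot\,;E)$ has a simple pole precisely at each rescaled eigenvalue $(\lambda_{j,N}-E)N\rho(E)$, so the set $W_N(\,\cdot\,;E)^{-1}(\infty)$ is exactly the rescaled eigenvalue point process. On the other hand, since $W_N(z;E) = -\tfrac{1}{N\rho(E)}\,P_N'/P_N$ evaluated at the shifted argument $E+z/(N\rho(E))$, the zeros of $W_N(\,\cdot\,;E)$ are precisely the values of $z$ at which $P_N'$ vanishes there. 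Hence $W_N(\,\cdot\,;E)^{-1}(0)$ is the rescaled sequence of critical points of $P_N$.

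Next, the deterministic shift by $\tfrac{\pi E}{\sqrt{4-E^2}}$ in Proposition~\ref{aw} converts the equation $W_N(z;E)=0$ into $W(z)=\tfrac{\pi E}{\sqrt{4-E^2}}=-a$ in the limit. Using $W=-\Phi'/\Phi$, which follows from (\ref{eq:w}) together with the definition (\ref{eq:defphi}) of $\Phi$, this is equivalent to $\Phi'(z)=a\,\Phi(z)$, which is precisely the defining equation of $\mathfrak{Si}'_a$. Meanwhile $W^{-1}(\infty)=\mathfrak{Si}$ because, again by (\ref{eq:w}), $W$ has simple poles precisely at the points of $\mathfrak{Si}$.

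Finally I would apply the continuous mapping theorem. The map $w\mapsto (w^{-1}(\infty),w^{-1}(0))$, sending a meromorphic element of $\mathcal R$ (topologised by locally uniform convergence on $\mathbb C\setminus\mathbb R$) to a pair of locally finite multi-subsets of $\mathbb R$ (topologised vaguely), is continuous: this is Hurwitz's theorem applied to $w$ and to $1/w$, together with the fact, recorded just before the Corollary, that real poles and real zeros of a meromorphic Nevanlinna function are simple and cannot collide under limits. Combined with Proposition~\ref{aw}, this yields (\ref{eq:conv}). The only step requiring some vigilance is the \emph{joint} convergence of the pair: one must confirm that zeros and poles of $W_N(\,\cdot\,;E)$ remain uniformly separated on compact windows in the limit, which follows from the strict interlacing of $W_N^{-1}(0)$ with $W_N^{-1}(\infty)$ and the a.s.\ simplicity of the points of $\mathfrak{Si}$.
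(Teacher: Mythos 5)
Your proposal is correct and follows essentially the same route as the paper: identify the rescaled eigenvalues and critical points with $W_N(\,\cdot\,;E)^{-1}(\infty)$ and $W_N(\,\cdot\,;E)^{-1}(0)$, note the continuity of $w\mapsto w^{-1}(a)$ on meromorphic Nevanlinna functions, and push Proposition~\ref{aw} through this map. The paper states this more tersely, but the Hurwitz/interlacing justification of continuity and the observation that joint convergence comes for free (a pair of continuous maps is continuous) are exactly the right unspoken ingredients.
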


To recapitulate, the non-obvious part of the statement is  that the zeros $\lambda_{j,N}$ which
are not in a $\mathcal O(1/N)$--neighbourhood of $E$ influence the critical points $\lambda_{j,N}'$ near $E$ only via the deterministic quantity $\pi E (4-E^2)^{-1/2}$.
Colloquially, the conditional distribution of the critical points in $(E - r/N, E+r/N)$ given
the eigenvalues in $(E - R/N, E+R/N)$ degenerates in the limit $R \to \infty$.

\begin{proof}[Proof of Corollary~\ref{cor:wig}] Follows from Theorem~\ref{thm:sine}
and Corollary~\thesubsection.
\end{proof}

\mypar{xi} Denote
\[ W_{t,T}(z) = - \frac{2\pi i}{\log T} \frac{\xi'(\frac12 + i [t + \frac{2\pi z }{\log T}])}{\xi(\frac12 + i [t + \frac{2\pi z}{\log T}])}~, \quad 0 \leq t \leq T~. \]
Assuming the Riemann hypothesis, $W_{t,T}$ belongs to the Nevanlinna class. \footnote{the property $W_{t,T} \in \mathcal{R}$ is
independent of $t$ and $T$, and is in fact equivalent to the Riemann hypothesis.}
We shall treat $t$ as a random variable uniformly chosen in $[0, T]$, and denote
the corresponding random function by $W_T$. The next proposition
is close to the  results for the $\zeta$-function which were proved in \cite{GGM} and \cite{FGLL}.

\begin{namedprop} Assume the Riemann hypothesis.
Let $T_n \to \infty$ be a sequence and $\mathfrak P$ --- a
point process such that, for $t$ uniformly chosen in $[0, T_n]$,
\[ \sum_{\xi(\frac12 + i\gamma) = 0} \delta(u - (\gamma - t)\frac{\log T_n}{2\pi})\to \mathfrak P \quad \text{in distribution}~,
\quad n \to \infty~.\]
Then $W_{T_n} \to W_{\mathfrak P}$ in distribution,
where $W_{\mathfrak P}$ is the unique random Nevanlinna  function such
that
\[ W_{\mathfrak P}^{-1}(\infty) = \mathfrak P~, \quad  W_{\mathfrak P}(i\infty) = i\pi~,\]
and in particular, 
\[ \left(\sum_{\xi(\frac12 + i\gamma) = 0} \delta(u - (\gamma - t)\frac{\log T_n}{2\pi}), \sum_{\xi'(\frac12 + i\gamma) = 0} \delta(u - (\gamma' - t)\frac{\log T_n}{2\pi}) \right)\to \left(\mathfrak P,  \mathfrak P'_0 \right)~,\]
where $\mathfrak P'_0 =  W_{\mathfrak P}^{-1}(0)$.
\end{namedprop}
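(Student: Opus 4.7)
My plan is to mirror the Aizenman--Warzel blueprint used for Wigner matrices (Proposition~\ref{aw}), with the local semicircle law replaced by the classical Riemann--von Mangoldt counting asymptotics for zeros of $\xi$, and the Erd\H{o}s--Yau / Tao--Vu universality replaced by the \emph{hypothesis} that the rescaled zeros of $\xi$ converge to $\mathfrak P$. The key structural observation is that, by Hadamard's factorisation of $\xi$, the random function $W_{t,T_n}(z)$ is, up to deterministic correction terms, the logarithmic derivative of a Hadamard product over the rescaled zeros. Hence, once the tail contribution is controlled, convergence of the zero process will control $W_{T_n}$ on compact subsets of $\mathbb C \setminus \mathbb R$.

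Concretely, starting from $\xi(s) = e^{A+Bs}\prod_\rho (1-s/\rho)e^{s/\rho}$ and differentiating logarithmically at $s = \tfrac12 + i(t + \tfrac{2\pi z}{\log T_n})$, then pairing the terms for $\rho$ and $1-\rho$ (which, under RH, correspond to $\gamma$ and $-\gamma$), one obtains, for every $R > 0$,
\[
W_{t,T_n}(z) = \sum_{|x|<R} \frac{1}{x-z} + r_{n,R}(z;t)~, \qquad x = (\gamma - t)\tfrac{\log T_n}{2\pi}~,
\]
where $r_{n,R}$ absorbs the tail of the sum together with the explicit constants produced by the Hadamard regularisation and by the logarithmic derivative of the Gamma factor in $\xi$. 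Under RH each $W_{t,T_n}$ lies in $\mathcal R$; tightness of $W_{T_n}$ as a random element of $\mathcal R$ follows from a uniform bound on $\mathbb E\, \Im W_{T_n}(i)$, which reduces via the Poisson kernel to the mean density of rescaled zeros, itself controlled by $N(T) \sim \tfrac{T}{2\pi}\log T$.

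For any subsequential distributional limit $W_\infty$, the input hypothesis delivers the joint convergence of the finite-range sum $\sum_{|x|<R} (x-z)^{-1}$ to $\sum_{x \in \mathfrak P \cap (-R,R)} (x-z)^{-1}$, for any $R$ outside a countable exceptional set. The principal obstacle is to show that the tail $r_{n,R}(z;t)$ converges, first as $n \to \infty$ and then as $R \to \infty$, to the deterministic value $i\pi$ --- in other words, to identify the boundary normalisation $W_\infty(i\infty) = i\pi$. Crucially, this step does not invoke any correlation conjecture: it relies only on the sharp concentration of $N(T+H) - N(T)$ around $H \tfrac{\log T}{2\pi}$, so that the mean number of rescaled zeros in a macroscopic window behaves as the window length. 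A symmetric regularisation around $t$, combined with Stirling's asymptotics for the Gamma factor and with the pairing of zeros afforded by the functional equation of $\xi$, reduces $r_{n,R}(z;t) - i\pi$ to such counting quantities, so that the $i\pi$ emerges exactly as it does in the definition~(\ref{eq:w}) of $W$.

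Once $W_\infty$ is identified as a Nevanlinna function with pole set $\mathfrak P$, unit residues, and $W_\infty(i\infty) = i\pi$, its uniqueness is immediate from the Herglotz representation: the atomic part of the representing measure is fixed by $\mathfrak P$ with unit residues, and the remaining constant-plus-linear freedom is pinned down by the boundary behaviour at $i\infty$. Hence $W_\infty = W_{\mathfrak P}$; the limit being unique, $W_{T_n} \to W_{\mathfrak P}$ in distribution along the full sequence. The claimed joint convergence of the zeros of $\xi$ and of the critical points then follows at once, because on meromorphic Nevanlinna functions the maps $w \mapsto w^{-1}(\infty)$ and $w \mapsto w^{-1}(0)$ are jointly continuous in the topology of locally uniform convergence on $\mathbb C \setminus \mathbb R$, exactly as used after Proposition~\ref{aw}.
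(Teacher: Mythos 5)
Your plan follows the same Aizenman--Warzel blueprint as the paper: write $W_{t,T}$ via the Hadamard product of $\xi$, control the tail at $i\infty$, then identify the limit through the Herglotz representation. The structural skeleton is right, and items such as tightness from $N(T)\sim \tfrac{T}{2\pi}\log T$, the uniqueness of $W_{\mathfrak P}$ once the pole set and the $i\infty$-normalisation are fixed (this is precisely \cite[Theorem 4.1]{AW}), and the continuity of $w\mapsto w^{-1}(\infty)$, $w\mapsto w^{-1}(0)$ all match the paper.

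However, there is a real gap in your treatment of the tail $r_{n,R}$. You conflate the first-moment statement with the concentration statement. The symmetric regularisation, the functional equation $\xi(1-z)=\xi(z)$ and Stirling's asymptotics establish only that the \emph{average} of $W_{t,T}(iR)$ over $t\in[0,T]$ tends to $i\pi$: this is exactly the contour-integration argument behind \eqref{eq:a}. But to identify the subsequential limit $W_\infty$ one must also show that the tail \emph{fluctuations} vanish as $R\to\infty$, i.e.\ a variance bound of the type
\[
\limsup_{T\to\infty}\frac1T\int_0^T |W_{t,T}(iR)-i\pi|^2\,dt \le \frac{C}{R^2}~,
\]
which is the paper's Lemma~\eqref{eq:b}. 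That bound is not a counting or density statement; it is genuinely a pair-correlation input, proved in the paper from Montgomery's conditional asymptotics for $F(\alpha,T)$ together with the uniform integral bound $\sup_x\int_x^{x+1}F(\alpha,T)\,d\alpha\le C$ of Goldston and Goldston--Gonek. You attribute this step to ``sharp concentration of $N(T+H)-N(T)$'', i.e.\ the Fujii bound; but the paper invokes Fujii only for tightness of the point processes and for the existence of $W_{\mathfrak P}$ (the Remark), not for the convergence of $W_{T_n}$ itself. Whether the Fujii bound \emph{alone} suffices to produce the required tail-variance estimate is not obvious: the tail of $\Re\, W_{t,T}(iR)$ is a long-range, conditionally convergent sum, and converting its variance into a bound via $\operatorname{Var}\bigl(N(t+h)-N(t)\bigr)$ requires a careful integration-by-parts against a kernel with slow decay, together with control on the cross-terms --- work that you have not done and that the paper sidesteps by going directly through $F(\alpha,T)$. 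You should either carry out that reduction and bound explicitly, or replace your claimed input by the Montgomery/Goldston estimates as the paper does.

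Two smaller remarks. First, you never state the precise Aizenman--Warzel hypothesis (their Theorem~6.1) that turns ``point process convergence plus vanishing tail variance'' into ``convergence of the Nevanlinna functions''; your Herglotz-representation argument is in the right spirit but should be tethered to that criterion to be complete. Second, the phrase ``so that the mean number of rescaled zeros in a macroscopic window behaves as the window length'' suggests that a first-moment count is what is being used; since it is the variance that matters for the tail, this should be reworded to avoid the impression that a density estimate suffices.
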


\begin{rmk*} Assuming the Riemann hypothesis, it follows from the results of  Fujii \cite{Fuj,Fuj2} that
\begin{equation}\label{eq:fujii}
\frac{1}{T} \int_0^T \left(N(t + \frac{2\pi R}{\log T}) - N(t) - R\right)^2 dt  \leq C \log(e+R)~,	 
\end{equation}
therefore the family of point processes $W_{T}^{-1}(\infty)$ is precompact (i.e.\ for any
finite interval $I$ the family of random variables $\# (W_{T}^{-1}(\infty) \cap I)$ is  tight),
and, moreover, any limit point $\mathfrak P$ satisfies:
\[ \mathbb{E} \# \left[ \mathfrak P \cap [-R, R] \right] = 2R~, \quad
\mathbb{E} \left| \# \left[ \mathfrak P \cap [0, R] \right]  - R\right|^2 \leq C\log(e+R)~. \]
In particular, the conditions of \cite[Theorem 4.1]{AW} are satisfied, and therefore
the function $W_{\mathfrak P}$ (uniquely) exists.
\end{rmk*}

\noindent Proposition~{\thesubsection} implies
\begin{namedcor} Assume the Riemann hypothesis and the multiple correlation
conjecture (\ref{eq:multcor}). Then
\[ W_{T} \overset{\text{distr}}{\longrightarrow} W~, \quad  \frac{\xi(\frac12 + i(t + \frac{2\pi z}{\log T}))}{\xi(\frac12 + it)} \overset{\text{distr}}{\longrightarrow} \Phi(z)~, \quad T \to \infty \]
and consequently
\begin{align*}
&\left( \sum_{\xi(\frac12 + i\gamma) = 0} \delta(u - (\gamma - t)\frac{\log T}{2\pi}),
\sum_{\xi'(\frac12 + i\gamma') = 0} \delta(u - (\gamma' - t)\frac{\log T}{2\pi})\right)\overset{\text{distr}}{\longrightarrow} (\mathfrak{Si}, \mathfrak{Si}'_0)~.
\end{align*}
\end{namedcor}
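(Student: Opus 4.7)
The plan is to apply the preceding Proposition with $\mathfrak P = \mathfrak{Si}$: the multiple correlation conjecture \eqref{eq:multcor} is precisely the statement that the rescaled zeros of $\xi$ around a uniform $t \in [0,T]$ converge in distribution to $\mathfrak{Si}$, so the Proposition yields $W_T \to W_{\mathfrak{Si}}$ in distribution, where $W_{\mathfrak{Si}}$ is characterised as the unique random Nevanlinna function whose pole set is $\mathfrak{Si}$ and which satisfies $W_{\mathfrak{Si}}(i\infty) = i\pi$. To identify $W_{\mathfrak{Si}}$ with the function $W$ of \eqref{eq:w}, I would verify these two properties for $W$: the pole set is $\mathfrak{Si}$ by construction, and the normalisation $W(i\infty) = i\pi$ is immediate from the first expression in \eqref{eq:w}, since $\sum_{x \in \mathfrak{Si}} \bigl[(x-iR')^{-1} - (x-iR)^{-1}\bigr] \to 0$ as $R' \to \infty$ by the rigidity of the counting function of $\mathfrak{Si}$ that already underlies the Aizenman--Warzel setup.

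For the joint convergence of the rescaled zeros of $\xi$ and $\xi'$, I would invoke the continuity of the maps $w \mapsto w^{-1}(\infty)$ and $w \mapsto w^{-1}(0)$ on meromorphic Nevanlinna functions, already used in Section~\ref{aw}: together with $W_T \to W$, this immediately produces the limit $(W^{-1}(\infty), W^{-1}(0)) = (\mathfrak{Si}, \mathfrak{Si}'_0)$.

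The convergence of the normalised $\xi$-ratio would be obtained by integrating the convergence of logarithmic derivatives. Setting $G_{t,T}(z) = \xi(\tfrac12 + i(t + \tfrac{2\pi z}{\log T}))/\xi(\tfrac12 + it)$, one has $G_{t,T}(0) = 1$ and $G_{t,T}'/G_{t,T} = -W_T$, while analogously $\Phi(0) = 1$ and $\Phi'/\Phi = -W$. Fixing $z$ with $\Im z > 0$ and integrating along a path in the upper half plane from $0$ to $z$, locally uniform convergence $W_T \to W$ on compact subsets of $\mathbb C \setminus \mathbb R$ allows one to pass to the limit under the integral and exponentiate, yielding $G_{t,T}(z) \to \Phi(z)$ on $\mathbb C \setminus \mathbb R$. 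The only non-formal step, which I expect to be the main technical obstacle, is the upgrade from locally uniform convergence on $\mathbb C \setminus \mathbb R$ to locally uniform convergence on all of $\mathbb C$ away from the zero set of the entire limit $\Phi$: this is the Hurwitz-type argument alluded to in the footnote after Proposition~\ref{aw}', and becomes available once the joint convergence of the zero sets is in hand.
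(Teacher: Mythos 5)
Your proposal is correct and follows essentially the same route the paper takes: apply the preceding Proposition with $\mathfrak P = \mathfrak{Si}$, identify $W_{\mathfrak{Si}}$ with $W$ by checking its poles and its normalisation $W(i\infty) = i\pi$, read off the joint process convergence via the continuity of $w \mapsto (w^{-1}(\infty), w^{-1}(0))$ already invoked in Section~\ref{aw}, and obtain the ratio convergence by the same careful integration used in the proof of Proposition~\ref{aw}'. You also correctly flag the only genuinely delicate point (upgrading locally uniform convergence from $\mathbb C \setminus \mathbb R$ to $\mathbb C$), which the paper likewise relegates to a footnote.
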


\begin{proof}[Proof of Corollary~\ref{cor:xi}] Follows from Theorem~\ref{thm:sine}
and Corollary~\thesubsection.
\end{proof}

The proof of  Proposition~{\thesubsection} relies on the following lemma. Related results go back to the
work of Selberg \cite{Sel}. We essentially follow the argument in \cite{GGM},
relying on the work of Montgomery \cite{Mont}. 

\begin{lemma*} Assuming the Riemann hypothesis, one has for any $R > 0$:
\begin{align}
\label{eq:a}
&\lim_{T \to \infty} \int_0^T \frac{dt}{T} W_{t,T}(iR) = i\pi \\
\label{eq:b}
 &\limsup_{T \to \infty} \int_0^T\frac{dt}{T}  \,\,
|W_{t,T}(iR) - i\pi|^2 \leq \frac{C}{R^2}~.
\end{align}
\end{lemma*}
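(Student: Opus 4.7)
Plan for (a). I would use the fundamental theorem of calculus. Setting $\sigma_0 = \tfrac12 - \tfrac{2\pi R}{\log T}$ so that $W_{t,T}(iR) = -\tfrac{2\pi i}{\log T}\tfrac{\xi'}{\xi}(\sigma_0+it)$, the identity $\tfrac{d}{dt}\log\xi(\sigma_0+it) = i\tfrac{\xi'}{\xi}(\sigma_0+it)$ gives
\[
\int_0^T W_{t,T}(iR)\,dt = -\frac{2\pi}{\log T}\bigl[\log\xi(\sigma_0+iT) - \log\xi(\sigma_0)\bigr]
\]
for the continuous branch of $\log\xi$ starting from the real positive value $\log\xi(\sigma_0)$ at $t=0$. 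Applying Stirling to the $\Gamma(s/2)$ factor in $\xi(s) = \tfrac{s(s-1)}{2\pi^{s/2}}\Gamma(s/2)\zeta(s)$, together with $\log\zeta(\sigma_0+iT) = O(\log T)$, yields $\Im\log\xi(\sigma_0+iT) = -\vartheta(T) + O(\log T)$ with $\vartheta(T) = \tfrac{T}{2}\log\tfrac{T}{2\pi} - \tfrac{T}{2} = \pi N(T) + O(1)$, and $\Re\log\xi(\sigma_0+iT) = O(T)$. Division by $T$ sends the real part to $0$ and the imaginary part to $\pi$.

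Plan for (b). Using $|W-i\pi|^2 = |W|^2 - 2\pi\,\Im W + \pi^2$ and the conclusion of (a), the claim reduces to $\limsup_T \tfrac{1}{T}\int_0^T |W_{t,T}(iR)|^2\,dt \le \pi^2 + C/R^2$. Under RH the Hadamard product of $\xi$ gives
\[
W_{t,T}(iR) = -\frac{1}{L}\sum_\gamma \frac{1}{(t-\gamma)+i\eta},\qquad L = \tfrac{\log T}{2\pi},\ \eta = \tfrac{R}{L},
\]
so
\[
|W_{t,T}(iR)|^2 = \frac{1}{L^2}\sum_{\gamma,\gamma'}\frac{1}{\bigl[(t-\gamma)+i\eta\bigr]\bigl[(t-\gamma')-i\eta\bigr]}.
\]
Averaging over $t \in [0,T]$ by partial fractions converts the double sum into a weighted pair-correlation sum over the zeros of $\xi$. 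Applying Montgomery's pair-correlation result under RH (namely $F(\alpha,T)/N(T) = |\alpha| + T^{-2|\alpha|}\log T + o(1)$ for $|\alpha|\le 1$, with the trivial bound outside this range), as extracted in the [GGM] argument, one obtains
\[
\frac{1}{T}\int_0^T |W_{t,T}(iR)|^2\,dt = \pi^2 + O(1/R^2) + o_{T\to\infty}(1).
\]
The Fourier transform $\widehat{(u+i\eta)^{-1}}(\xi) = -2\pi i\, e^{-\eta\xi}\mathbf{1}_{\xi>0}$ has total $L^2$-mass $\pi/\eta = \pi L/R$; integrating $|\hat g|^2 = 4\pi^2 e^{-2\eta\xi}$ against Montgomery's form factor (which vanishes linearly at $\alpha = 0$, i.e.\ $\xi = 0$, in the correct variables) produces the extra $R^{-1}$ saving, giving $R^{-2}$ in total.

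Main obstacle. Obtaining the sharp $C/R^2$ (rather than $(\log R)/R^2$) requires the linear vanishing of the Montgomery form factor at $\alpha = 0$ — i.e.\ the pair-repulsion of zeros of $\xi$ — which is unconditional given RH via Montgomery's explicit formula. A naive application of Fujii's variance bound (\ref{eq:fujii}) via Cauchy--Schwarz would lose a $\log R$. The other technical point is justifying the partial-fractions reduction: the Hadamard sum is only conditionally convergent, and the time-average over $[0,T]$ produces boundary $\log(\gamma/(T-\gamma))$ terms which must be shown to cancel in the off-diagonal sum (as they do for the ideal uniform lattice, after resummation), so that the Montgomery pair-correlation input can be applied cleanly.
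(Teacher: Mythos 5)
Your overall strategy is correct and closely parallels the paper's, though the technical execution differs at a few points worth noting.

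For (\ref{eq:a}), your FTC-plus-Stirling computation of the change of argument of $\xi$ along the vertical line $\Re s = \sigma_0 = \tfrac12 - \tfrac{2\pi R}{\log T}$ is essentially the Riemann--von Mangoldt argument; the paper packages the same content as a contour integral of $\xi'/\xi$ over a thin rectangle symmetric about the critical line, using $\xi(1-z)=\xi(z)$ to identify the two vertical contributions and the residue theorem to count zeros. Both are fine; the paper's version has the minor bookkeeping advantage that one never needs a pointwise bound on $\log\zeta(\sigma_0+iT)$ very near the critical line (a point you'd have to justify with RH-conditional $\arg\zeta$ bounds, plus a perturbation of $T$ to dodge ordinates of zeros, exactly as the paper does with $T'$).

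For (\ref{eq:b}), the reduction via $|W-i\pi|^2 = |W|^2 - 2\pi\,\Im W + \pi^2$ and part (\ref{eq:a}) is valid, and the plan to turn $\frac1T\int_0^T|W|^2$ into a pair-correlation sum and feed in Montgomery's $F(\alpha,T)$ is the same core input the paper uses (their equations (\ref{eq:defF})--(\ref{eq:withw1}), relying on (\ref{eq:mont1}), and the averaged bound (\ref{eq:GG}) of Goldston and Goldston--Gonek). The main difference is technical but not cosmetic: the paper replaces the flat average $\frac1T\int_0^T dt$ by the Cauchy kernel $\frac{T\,dt}{\pi(t^2+T^2)}$ on all of $\mathbb R$. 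This is precisely what dissolves the two obstacles you flag. The integral of each product term $I_{\gamma,\tilde\gamma}$ can then be computed exactly by residues (the pole at $t=iT$ gives the cross-term $I'_{\gamma,\tilde\gamma}$ that reproduces $(i/2)^2$ from the smoothed form of (\ref{eq:a}), and the pole at $t=\tilde\gamma + iR/\log T$ gives the pair-correlation term $I''_{\gamma,\tilde\gamma}$), the regularizing subtractions in the Hadamard sum (\ref{eq:hadlogder}) drop out cleanly, and there are no $[0,T]$ endpoint terms to control. You would need to supply all of that by hand in the flat-average version.

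One cautionary correction to your heuristic: the $R^{-2}$ is not produced solely by the linear vanishing $F(\alpha)\sim|\alpha|$ near $\alpha=0$. The term $T^{-2|\alpha|}\log T$ in (\ref{eq:mont1}), which is a nascent delta at $\alpha=0$ of mass tending to $\tfrac12$, contributes a constant (not $O(R^{-2})$); that constant is exactly what is cancelled by the cross-term coming from (a), i.e., by the $-2\pi\,\Im W+\pi^2$ piece in your decomposition. Only after that cancellation does the residual error come from $\int_0^\infty |\alpha|\,e^{-2R\alpha}\,d\alpha = \tfrac1{4R^2}$ (for $\alpha\le 1$) together with the averaged bound (\ref{eq:GG}) for $\alpha>1$. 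So the linear vanishing is what controls the residue, not what removes the divergence as $R\to 0$. With that clarification, your proposal is a viable alternative route to the same estimate.
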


\begin{proof}[Proof of Proposition~\thesubsection]
The convergence of $W_{T}$ to $\Phi$ follows from the lemma, in combination with the general criterion of Aizenman and Warzel \cite[Theorem 6.1]{AW}; it implies the other two
statements.
\end{proof}

\begin{proof}[Proof of Lemma]
To prove (\ref{eq:a}), consider the integral of $\xi'/\xi$ along the closed contour
$\Gamma$ composed of the segments connecting the points
\[ \frac12 - \frac{R}{\log T}, \frac12 + \frac{R}{\log T},
\frac12 + \frac{R}{\log T} +T i,  \frac12 + \frac{R}{\log T} +T' i,
\frac12 - \frac{R}{\log T} +T i \]
counterclockwise (see Figure~\ref{fig:contour}), where $T'$ is the real number
closest to $T$ such that there are no zeros of the $\xi$-function in the
$1/(100 \log T)$-neighbourhood of $\frac12 + iT$. 
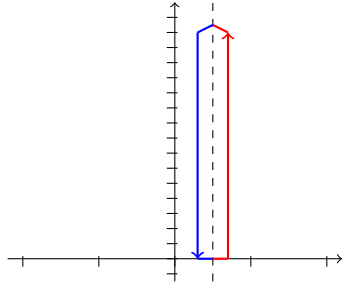
\begin{figure}[h]\label{fig:contour}
\centerline{
\begin{tikzpicture}
      \draw[->] (-2.2,0) -- (2.2,0) ;
      \draw[->] (0,-.3) -- (0,3.4);
       \draw[dashed] (1/2,-.3) -- (1/2,3.4);
      \foreach \x in {-2,...,2}
     		\draw (\x,1pt) -- (\x,-3pt);
	\foreach \y in {-.2,0,...,3.2}
     		\draw (1pt,\y) -- (-3pt,\y);
		\draw[blue,thick] (.3, 0) -- (.5, 0);
				\draw[red,thick] (.5, 0) -- (.7, 0);
        \draw[red,thick,->] (.7, 0) -- (.7, 3);
         \draw[red,thick] (.7, 3) -- (.5, 3.1);
          \draw[blue,thick] (.5, 3.1) -- (.3, 3);
         \draw[blue,thick,->] (.3,3) -- (.3, 0);
 \end{tikzpicture}}
 \caption{The contour $\Gamma$ from the proof of Lemma~\thesubsection}
\end{figure}
By the residue theorem and the asymptotics (\ref{eq:riem}) of $N(T)$, the integral is equal to
\[ 2 \pi i \, \# \left\{ \text{zeros of $\xi$ with imaginary part in $[0, T']$}\right\}
= 2\pi i \,\, \frac{T \log T}{2\pi} \, (1+\mathcal o(1))~.\]
On the other hand, from the functional equation $\xi(1-z) = \xi(z)$,
the integral along the left vertical line  is equal to the integral along the right vertical line, and
the integral along the bottom horizontal line is zero; the integral along the two segments on the top is 
negligible (as one can see, for example, from (\ref{eq:hadlogder}) below). Therefore 
\[ \int_0^T (\xi'/\xi)(\frac12 + i(t + iR/\log T)) dt = -\pi \,\, \frac{T \log T}{2\pi} \, (1+\mathcal o(1))~.\]
which is equivalent to (\ref{eq:a}). We note for the sequel that (\ref{eq:a}) implies
the following smoothened version:
\begin{equation}\label{eq:a.smooth} 
\lim_{T \to \infty} \int_{-\infty}^\infty \frac{T \, dt}{\pi(t^2 + T^2)} W_{t,T}(iR) = i\pi~.
\end{equation}

\noindent To prove (\ref{eq:b}), we use the Hadamard product representation (cf.\ \cite[2.12]{Titch})
\[\xi(\frac{1}{2} + iz) = \xi(\frac12) e^{\hat b z} \prod_{\xi(\frac12 + i\gamma)=0} (1 - z/\gamma) e^{z/\gamma}~,\]
which implies that
\begin{equation}\label{eq:hadlogder} W_{t,T}(iR) = \sum_{\gamma} \left[ \frac{1}{(\gamma-t) \logT - iR} -\frac{1}{ \gamma\,\logT } \right] - \frac{2\pi \hat b}{\log T}~.\end{equation}
Integrating with the weight $T/(\pi(t^2 + T^2))$ and using (\ref{eq:a.smooth}) and the 
Cauchy theorem, we
obtain:
\begin{equation}\label{eq:a.smooth'}
\lim_{T \to \infty}  \sum_\gamma \left[ \frac{1}{(\gamma-iT)\log T - iR} -\frac{1}{ \gamma\,\log T } \right] = \frac{i}{2}~.\end{equation}
Note that (\ref{eq:a.smooth'}) holds for any real $R$ (positive or negative).

Now we compute
\[I(R,T)
= \int_{-\infty}^\infty \frac{T dt}{\pi(t^2 + T^2)} \left| \frac{W_{t,T}(iR)}{2\pi} + \frac{\hat b}{\log T}\right|^2 = \sum_{\gamma,\tilde\gamma} I_{\gamma,\tilde\gamma}(R,T)~, \]
where 
\[\begin{split} &I_{\gamma,\tilde\gamma}(R, T)
= \int_{-\infty}^\infty \frac{T dt}{\pi(t^2 + T^2)}\left[ \frac{1}{(\gamma-t)\log T - iR} -\frac{1}{ \gamma\,\log T } \right] \\
&\qquad\times\left[ \frac{1}{(\tilde\gamma-t)\log T +iR} -\frac{1}{\tilde\gamma\,\log T } \right]~.\end{split}\]
By the Cauchy theorem,
\[ I_{\gamma,\tilde\gamma}(R, T) = I_{\gamma,\tilde\gamma}'(R,T) +  I_{\gamma,\tilde\gamma}''(R,T)~,\]
where
\[ \begin{split}
&I_{\gamma,\tilde\gamma}'(R,T) = \left[ \frac{1}{(\gamma-iT)\log T - iR} -\frac{1}{ \gamma\,\log T } \right]  \left[ \frac{1}{(\tilde\gamma-iT)\log T +iR} -\frac{1}{\tilde\gamma\,\log T } \right]\\
&I_{\gamma,\tilde\gamma}''(R,T) = \frac{-2iT}{T^2 + (\tilde\gamma+ \frac{iR}{\log T})^2} \, 
\left[ \frac{1}{(\gamma - \tilde\gamma)\log T - 2iR} -\frac{1}{ \gamma\,\log T }\right]\, \frac{1}{\log T}~.
\end{split}\]
In view of (\ref{eq:a.smooth'}),
\begin{equation}\label{eq:ingr1} \lim_{T \to \infty} \sum_{\gamma, \tilde\gamma}I_{\gamma,\tilde\gamma}'(R,T) = (i/2)^2 = -1/4~. \end{equation}
To estimate the sum of $I_{\gamma,\tilde\gamma}''(R,T)$, let 
\[\begin{split}
& J_{\gamma,\tilde\gamma}(R, T) = \frac{2T}{T^2 + (\tilde \gamma)^2} \frac{2R}{(\gamma - \tilde\gamma)^2\log^2 T +4R^2} \, \frac{1}{\log T}~, \\
&J_{\gamma,\tilde\gamma}'(R, T) = \Re I_{\gamma,\tilde\gamma}''(R,T)  - J_{\gamma,\tilde\gamma}(R, T)~. \end{split}\]
Using the estimates
\[\begin{split}
\left| \frac{1}{T^2 + (\tilde\gamma + \frac{iR}{\log T})^2} -  \frac{1}{T^2 + (\tilde\gamma)^2} \right|
&\leq \frac{3R}{(T^2 +(\tilde\gamma)^2)^2}\\
\left| \frac{1}{(\gamma - \tilde\gamma)\log T - 2iR} -\frac{1}{ \gamma\,\log T }\right| 
&\leq \frac{\sqrt 2(|\tilde\gamma| \log T + 2R)}{|\gamma| \log T(|\gamma - \tilde\gamma|\log T + 2R)}~,\end{split}\]
we deduce that
\begin{equation}\label{eq:ingr3} 
\lim_{T \to \infty} \sum_{\gamma, \tilde \gamma} J'_{\gamma,\tilde\gamma}(R, T)  = 0~.\end{equation} 
Now we turn to $\sum_{\gamma,\tilde\gamma} J_{\gamma,\tilde\gamma}(R, T)$ and show that 
\begin{equation}\label{eq:ingr2}
\limsup_{T \to \infty}  \left|\sum_{\gamma,\tilde\gamma}J_{\gamma,\tilde\gamma}(R, T) - \frac12 \right| \leq \frac{\operatorname{Const}}{R^2}~.
 \end{equation}
It will suffice to prove that
\begin{equation}\label{eq:ingr2a}
 \limsup_{T \to \infty} \left|\ \frac{2\pi}{T \log T} \sum_{0 \leq \gamma, \tilde \gamma \leq T} \frac{2R}{(\gamma - \tilde\gamma)^2 \log^2 T + 4 R^2} 
  -1\right| \leq \frac{\operatorname{Const}}{R^2}~.\end{equation}
Let 
\begin{equation}\label{eq:defF} F(\alpha, T) = \frac{2\pi}{T \log T} \sum_{0 \leq \gamma, \tilde\gamma \leq T} T^{i\alpha(\gamma - \gamma')} w(\gamma - \tilde\gamma)~, \quad w(u) = \frac{4}{4+u^2}~. \end{equation}
Montgomery showed \cite{Mont} that 
\begin{equation}\label{eq:mont1} F(\alpha, T) = |\alpha| + T^{-2|\alpha|} \log T (1 + \mathcal o(1)) + \mathcal o(1)~, \quad
|\alpha| < 1 \end{equation}
(which implies (\ref{eq:mont})). Therefore
\begin{equation}\label{eq:GG} \sup_x \int_x^{x+1} F(\alpha, T) d\alpha \leq C\end{equation}
(see \cite{Gold,GG}, where this is proved with $C = \frac83 + \epsilon$ and $\frac{29}{12}+\epsilon$, respectively). From the definition (\ref{eq:defF}) of $F$ we have (cf.\ \cite[(2.11)]{GGM}):
\begin{equation}\label{eq:withw}
 \frac{2\pi}{T \log T} \sum_{0 \leq \gamma, \tilde \gamma \leq T} \frac{2R}{(\gamma - \tilde\gamma)^2 \log^2 T + 4 R^2}  w(\gamma - \tilde\gamma) 
= \int_0^\infty F(\alpha, T) e^{-2R|\alpha|} d\alpha~,
\end{equation}
and thus, from (\ref{eq:mont1}) and (\ref{eq:GG}),
\begin{equation}\label{eq:withw1}
\limsup_{T \to \infty} \left| \frac{2\pi}{T \log T} \sum_{0 \leq \gamma, \tilde \gamma \leq T} \frac{4R}{(\gamma - \tilde\gamma)^2 \log^2 T + 4 R^2}  w(\gamma - \tilde\gamma) -1 \right| \leq \frac{C}{R^2}~. 
\end{equation}
Observing that 
\[ \lim_{T \to \infty} \frac{2\pi}{T \log T} \sum_{0 \leq \gamma, \tilde \gamma \leq T} \frac{4R}{(\gamma - \tilde\gamma)^2 \log^2 T + 4 R^2}  (1-w(\gamma - \tilde\gamma)) = 0~, \]
we obtain (\ref{eq:ingr2a}) and thus (\ref{eq:ingr2}). 
The relations  (\ref{eq:ingr1}), (\ref{eq:ingr3}) and (\ref{eq:ingr2}) imply that
\[ \lim_{R \to \infty} \limsup_{T \to \infty} |I(R, T) - 1/4| = 0 \]
and hence (\ref{eq:b}) holds.
\end{proof}

\section{Repulsion}\label{S:rep}
In this section we prove Theorems~\ref{thm:sine} and \ref{thm:xi} and 
Corollary~\ref{cor:limit}, the (re-)formulation of
which we recall for the convenience of the reader. Let
\[ \Omega_k(\mathfrak{Si}, \epsilon) = \left\{ \# \left[\mathfrak{Si} \cap (-\epsilon, \epsilon)\right]  \geq k\right\}~, \quad
\Omega_k(\mathfrak{Si}'_a, \epsilon) = \left\{ \# \left[ \mathfrak{Si}'_a \cap (-\epsilon, \epsilon)\right] \geq k \right\}~. \]
\begin{quotedtheorem}[\ref{thm:sine}']
For any $a \in \mathbb R$, $k \geq 2$, $0 < \epsilon < \frac{1}{8\max(|a|, 2e)}$ and $R \geq 5$
\[ \mathbb{P} \left(\Omega_k(\mathfrak{Si}'_a, \epsilon) \setminus \left( \Omega_{k+1}(\mathfrak{Si}_a, (1 + \frac{4}{k-1}) \epsilon) \cup \Omega_{k+2}(\mathfrak{Si}_a, R\epsilon)\right)\right) \leq  2 \exp(- \frac{kR}{64})~.\]
\end{quotedtheorem}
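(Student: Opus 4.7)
The strategy is to use the interlacing identity $\mathfrak{Si}'_a = W^{-1}(-a)$ (where $W = -\Phi'/\Phi$ belongs to the Nevanlinna class) to replace the event ``many critical points in $(-\epsilon,\epsilon)$'' by a geometric constraint on $\mathfrak{Si}$, and then to bound the probability of that constraint using rigidity of the sine process.

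First, I would set up the geometric skeleton. Because $W$ is strictly monotone increasing on each interval between consecutive points of $\mathfrak{Si}$, running from $-\infty$ to $+\infty$, any configuration of $k$ critical points $c_1<\cdots<c_k$ in $(-\epsilon,\epsilon)$ is sandwiched between $k+1$ consecutive zeros $z_0<z_1<\cdots<z_k$ of $\mathfrak{Si}$ with $c_i\in(z_{i-1},z_i)$; in particular $z_1,\ldots,z_{k-1}\in(-\epsilon,\epsilon)$. Setting $\delta=4/(k-1)$, if both $z_0\geq-(1+\delta)\epsilon$ and $z_k\leq(1+\delta)\epsilon$, then all $k+1$ zeros fall inside $(-(1+\delta)\epsilon,(1+\delta)\epsilon)$, so $\Omega_{k+1}(\mathfrak{Si},(1+\delta)\epsilon)$ holds and the event in question does not occur. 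By the invariance of $\mathfrak{Si}$ under reflection, it is thus enough (at the cost of a factor $2$) to control the probability of the sub-event $E$ in which $z_k\geq(1+\delta)\epsilon$ and $\Omega_{k+2}(\mathfrak{Si},R\epsilon)$ fails.

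Next, on $E$ I would extract a quantitative constraint from the critical equations $W(c_i)=-a$. Isolating the poles at $z_{k-1}, z_k$ in the equation at $c_k$ gives
\[
\frac{1}{c_k - z_{k-1}} \;=\; \frac{1}{z_k - c_k} \;+\; a \;+\; W_{\mathrm{far}}(c_k),
\]
with $W_{\mathrm{far}}(z)=\sum_{y\in\mathfrak{Si}\setminus\{z_{k-1},z_k\}}(y-z)^{-1}$ the regularised tail. The left side exceeds $1/(2\epsilon)$ by interlacing, while the hypothesis $z_k-c_k\geq \delta\epsilon$ together with $|a|<1/(8\epsilon)$ caps the first two right-hand terms. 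For $k=2$ this already forces $W_{\mathrm{far}}(c_k)$ to exceed a threshold of order $1/\epsilon$. For general $k$, combining the $k$ equations (e.g.\ by summing $W(c_i)=-a$ and using the rigidity of $z_1,\ldots,z_{k-1}$ inside $(-\epsilon,\epsilon)$), the event $E$ translates into a deficit of order $kR$ in the count $\#\mathfrak{Si}\cap(-R\epsilon,R\epsilon)$ relative to its mean $2R\epsilon$, or equivalently an anomaly of order $kR$ in a linear statistic of $\mathfrak{Si}$ driven by $W_{\mathrm{far}}$.

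Finally, I would invoke the exponential concentration of linear statistics for the determinantal sine process (the hole-probability tail together with Laplace-transform bounds arising from the Fredholm determinant) to convert this deviation into a probability bound; tracking constants through the preceding steps produces the factor $1/64$. The step I expect to be the main obstacle is the second one for $k\geq3$: naively applying the equation at $c_k$ alone loses a factor of $k$ in the exponent, so the $k$ equations must be combined coherently (using the sandwich structure and the pigeonhole spacing $z_{i+1}-z_i\leq 2\epsilon/(k-2)$ among the inner zeros) to recover the linear-in-$kR$ scaling asserted in the theorem.
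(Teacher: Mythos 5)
Your geometric skeleton (interlacing forces $k+1$ bracketing zeros $z_0<\cdots<z_k$ with the inner $k-1$ inside $(-\epsilon,\epsilon)$, and if the outer two also stay inside $(-(1+\frac{4}{k-1})\epsilon,(1+\frac{4}{k-1})\epsilon)$ then $\Omega_{k+1}$ fires, so we may assume one of them, say $z_k$, lies outside) is exactly the paper's first step, and the closing ingredient (an exponential Laplace-transform bound for linear statistics of a determinantal process) is also the paper's key lemma. So the overall architecture is right.

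The middle step is where you go off course, and you yourself point at the soft spot. Isolating only the two poles $z_{k-1},z_k$ in the critical equation at $c_k$ does fail for $k\geq 3$: the term $1/(c_k-z_{k-1})$ is at most $1/(2\epsilon)$ but your subtracted term $1/(z_k-c_k)\leq (k-1)/(4\epsilon)$ already swamps it, so you get no constraint. The repair you speculate about, coherently combining all $k$ equations $W(c_i)=-a$, is not needed. The clean fix, which is what the paper does, is to evaluate the equation at the fixed endpoint $\epsilon$ rather than at the random point $c_k$, and to peel off \emph{all} $k-1$ interior zeros rather than just $z_{k-1}$. Concretely: since there is a critical point $c_k<\epsilon$ in the interval $(z_{k-1},z_k)\ni\epsilon$ on which $W$ is increasing, one has $W(\epsilon)\geq -a$; each of the $k-1$ zeros $x_j\in(-\epsilon,\epsilon)$ contributes at least $1/(2\epsilon)$ to $-\sum_{|x_j|<\epsilon}1/(x_j-\epsilon)$; hence the far tail satisfies $\sum_{|x_j-\epsilon|\geq\frac{4}{k-1}\epsilon}\frac{1}{x_j-\epsilon}\geq \frac{k-1}{2\epsilon}-|a|\geq\frac{k-1}{4\epsilon}$. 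A single inequality at a single deterministic point already produces the correct linear-in-$(k-1)$ threshold, and then the $\Omega_{k+2}(R\epsilon)^c$ assumption lets one truncate this to the range $|x_j-\epsilon|\geq(R-1)\epsilon$ at the cost of a fixed factor.

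A second, smaller correction: the deviation one needs to control is not ``a deficit of order $kR$ in $\#\,\mathfrak{Si}\cap(-R\epsilon,R\epsilon)$'' but the event that the linear statistic $\sum f(x_j)$ with $f(x)=\mathbbm{1}\{|x-\epsilon|\geq(R-1)\epsilon\}/(x-\epsilon)$ exceeds $\frac{k-1}{4\epsilon}$, which is a deviation of size $\sim (k-1)/\epsilon$ above its mean $-a=\pi E/\sqrt{4-E^2}$; optimising the Chernoff parameter at $t\sim(R-1)\epsilon$ then gives $\exp(-c(k-1)(R-1))$, whence the $kR/64$. Finally, note that the paper first transports the statement to the GUE characteristic polynomial via Proposition~\ref{aw}; this makes $W_N$ a genuine finite sum and the determinantal kernel finite-rank, so the Laplace/Fredholm lemma applies verbatim. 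If you insist on working directly with $\mathfrak{Si}$ and $W=-\Phi'/\Phi$, as you propose, all the sums need principal-value regularisation and the concentration lemma needs to be extended to locally trace-class kernels, which is doable but adds friction you can avoid.
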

Taking $R = 1000 k \log\frac1\epsilon$ and using (\ref{eq:rep0}), we obtain the version
of Theorem~\ref{thm:sine} stated in the introduction. 
\begin{quotedcor}[\ref{cor:limit}] For any $k\geq 2$ there exists a limit 
\[\mathcal c_k' = \lim_{\epsilon\to +0} \frac{\mathbb{P} (\Omega_k(\mathfrak{Si}'_a, \epsilon))}
{\epsilon^{(k+1)^2}} \in (\mathcal c_{k+1}, (1 + \frac{4}{k-1}) \mathcal c_{k+1})~,\]
independent of $a \in \mathbb{R}$.
\end{quotedcor}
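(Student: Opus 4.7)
The plan is to isolate the dominant contribution to $\mathbb P(\Omega_k(\mathfrak{Si}'_a,\epsilon))$ via Theorem~\ref{thm:sine}', then rescale and evaluate the limit by exploiting the determinantal structure of $\mathfrak{Si}$.

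Fix $a\in\mathbb R$ and take $R=R(\epsilon)=1000k\log(1/\epsilon)$. With $C:=1+\tfrac{4}{k-1}$ and
\[E_\epsilon:=\Omega_k(\mathfrak{Si}'_a,\epsilon)\cap\Omega_{k+1}(\mathfrak{Si},C\epsilon)\setminus\Omega_{k+2}(\mathfrak{Si},R\epsilon),\]
Theorem~\ref{thm:sine}' combined with (\ref{eq:rep0}) gives $\mathbb P(\Omega_k(\mathfrak{Si}'_a,\epsilon))=\mathbb P(E_\epsilon)+o(\epsilon^{(k+1)^2})$, since $\mathbb P(\Omega_{k+2}(\mathfrak{Si},R\epsilon))\lesssim (R\epsilon)^{(k+2)^2}$ and $2e^{-kR/64}$ are both $o(\epsilon^{(k+1)^2})$ for this~$R$. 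On $E_\epsilon$ exactly $k+1$ points $y_1<\cdots<y_{k+1}$ of $\mathfrak{Si}$ lie in $(-R\epsilon,R\epsilon)$, all confined to $(-C\epsilon,C\epsilon)$.

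Next I would rescale to strip away $a$. The equation $W(z)=-a$ defining $\mathfrak{Si}'_a$ (with $W=-\Phi'/\Phi$) splits on $E_\epsilon$, for $z\in(-\epsilon,\epsilon)$, as
\[\sum_{i=1}^{k+1}\frac{1}{y_i-z}=-a-W_{\mathrm{far}}(z),\qquad W_{\mathrm{far}}(z):=W(z)-\sum_{i=1}^{k+1}\frac{1}{y_i-z}.\]
Here $W_{\mathrm{far}}$ is analytic on $|z|<R\epsilon$; summing $|x-z|^{-2}$ over the far eigenvalues (density~$1$) gives $\max_{|z|\le\epsilon}|W_{\mathrm{far}}(z)-W_{\mathrm{far}}(0)|=O(1/R)=o(1)$, while $W_{\mathrm{far}}(0)=O_{\mathbb P}(1)$ by translation invariance and a standard second-moment bound for sine-process linear statistics (a local analogue of (\ref{eq:fujii})). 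Setting $z=\epsilon w$, $y_i=\epsilon v_i$ turns the equation into
\[\sum_{i=1}^{k+1}\frac{1}{v_i-w}=-\epsilon\bigl(a+W_{\mathrm{far}}(\epsilon w)\bigr)=o_{\mathbb P}(1),\qquad |w|\le 1,\]
so in the scaling limit the critical points are precisely those of $\prod_i(v_i-w)$; the parameter $a$ has dropped out.

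To identify $\mathcal c_k'$ explicitly, observe that the conditional density of $(y_1,\ldots,y_{k+1})$ on $E_\epsilon$ is, to leading order, the $(k+1)$-point correlation $\rho_{k+1}$ of $\mathfrak{Si}$ from (\ref{eq:defsine}), since the complementary ``empty-of-further-points'' factor has conditional probability $1-o(1)$ by the Fredholm determinant representation of sine-process gap probabilities. Taylor-expanding $\tfrac{\sin\pi u}{\pi u}=1-\tfrac{(\pi u)^2}{6}+\cdots$ about coincidence yields
\[\rho_{k+1}(\epsilon v_1,\ldots,\epsilon v_{k+1})=D_k\,\epsilon^{k(k+1)}\prod_{i<j}(v_i-v_j)^2+O\bigl(\epsilon^{k(k+1)+2}\bigr)\]
for an explicit $D_k>0$. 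Combining the two steps,
\[\mathcal c_k'=\frac{D_k}{(k+1)!}\int_{\mathcal V}\prod_{i<j}(v_i-v_j)^2\,dv,\]
where $\mathcal V\subset\mathbb R^{k+1}$ is the set of $(v_i)$ whose $k$ critical points of $\prod_i(v_i-w)$ all lie in $(-1,1)$. This is manifestly independent of $a$. Interlacing gives $(-1,1)^{k+1}\subset\mathcal V$, and Theorem~\ref{thm:sine}' applied in the scaling limit gives $\mathcal V\subset(-C,C)^{k+1}$; the scaling $v\mapsto v/C$ then evaluates the corresponding boundary integrals to $\mathcal c_{k+1}$ and $C^{(k+1)^2}\mathcal c_{k+1}$, producing the claimed bounds.

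The main obstacle I anticipate is the quantitative control of the conditioning step above: one must verify, uniformly over configurations in $E_\epsilon$, that the gap-probability factor separating $\rho_{k+1}$ from the actual conditional density on $E_\epsilon$ is $1-o(1)$, and that the $O_{\mathbb P}(1)$ bound on $W_{\mathrm{far}}(0)$ survives this conditioning without a loss that would disturb the leading constant. Both rely on the Fredholm determinant formalism for $\mathfrak{Si}$ and the rapid decay of the sine kernel at a distance greater than the local-crowding scale.
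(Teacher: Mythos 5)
Your high-level plan --- localize to $k+1$ eigenvalues of $\mathfrak{Si}$ near the origin, approximate the critical points by those of the local polynomial $\prod_j(w-v_j)$, and evaluate the leading constant via the $(k+1)$-point correlation --- is also the core of the paper's proof, and you have correctly anticipated the danger spot. The gap is in the step labelled ``standard second-moment bound.'' A bare estimate $\mathbb{E}\,|W_{\mathrm{far}}(0)|^2 = O(1)$ gives only a polynomial tail $\mathbb{P}(|W_{\mathrm{far}}(0)|\geq M)=O(M^{-2})$, which cannot compete with $\epsilon^{(k+1)^2}$ for any single relevant scale $M$. To salvage the argument one would have to integrate the displacement of the critical points over a variable margin (the distance from the critical points of $P_X$ to $\partial(-\epsilon,\epsilon)$), and this in turn requires the conditional law of the far field given the near configuration on $E_\epsilon$ to be uniformly comparable to the unconditioned law --- a quantitative decoupling you invoke (``Fredholm determinant formalism, rapid decay of the sine kernel'') but do not supply. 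Likewise $\max_{|z|\leq\epsilon}|W_{\mathrm{far}}(z)-W_{\mathrm{far}}(0)|=O(1/R)$ holds in expectation, not uniformly on $E_\epsilon$. Both difficulties arise because you condition on $E_\epsilon$ before analysing the far field.

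The paper avoids conditioning entirely. It introduces an event $\Upsilon(\epsilon)$ defined purely in terms of $\mathfrak{Si}$: there exist $k+1$ points of $\mathfrak{Si}$ in the \emph{wider} window $(-\epsilon^{1-\alpha},\epsilon^{1-\alpha})$ whose local polynomial $P_X$ has all its critical points in $(-\epsilon,\epsilon)$. It then sandwiches $\Omega_k(\mathfrak{Si}'_a,\epsilon)$ between $\Upsilon(\epsilon-\epsilon^{1+\alpha'})$ and $\Upsilon(\epsilon+\epsilon^{1+\alpha'})$ with $0<\alpha'<\alpha$. On the symmetric difference (away from $\Omega_{k+2}$) the far linear statistic $\sum_{|x|\geq\epsilon^{1-\alpha}} 1/(x\mp\epsilon)$ must exceed $c\,\epsilon^{-(1-\alpha')}$ in absolute value, and this has probability $o(\epsilon^{(k+1)^2})$ by the exponential concentration Lemma of Section~\ref{rep}, not by a variance bound: with these exponents $A^*(\cdot)\gtrsim\epsilon^{-(\alpha-\alpha')}$, so the tail is super-polynomially small in $\epsilon$. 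Evaluating $\mathbb{P}(\Upsilon(\epsilon))$ then reduces to the unconditioned $(k+1)$-point correlation, exactly as in your last step, and the $a$-independence is immediate since $\Upsilon$ depends on $\mathfrak{Si}$ alone.
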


\medskip\noindent
Next, denote
\[\begin{split}
\Omega_k(\xi, T, \epsilon) &= 
\left\{ 0 \leq t \leq T \, \Big| \, \# \left[ \gamma \in (t - \frac{2\pi\epsilon}{\log T}, 
t + \frac{2\pi\epsilon}{\log T})~, \,\, \xi(\frac12 + i\gamma) = 0\right] \geq k \right\}\\
 \Omega_k(\xi', T, \epsilon) &= 
\left\{ 0 \leq t \leq T \, \Big| \, \# \left[ \gamma' \in (t - \frac{2\pi\epsilon}{\log T}, 
t + \frac{2\pi\epsilon}{\log T})~, \,\, \xi'(\frac12 + i\gamma') = 0\right] \geq k \right\}
\end{split}\]
\begin{quotedtheorem}[\ref{thm:xi}']
Assume the Riemann hypothesis. For any $k \geq 2$, $0 < \epsilon <1$, $R \geq 1 + \frac{4}{k-1}$
\[ \frac1T \operatorname{mes} \left(\Omega_k(\xi', T, \epsilon) \setminus \left(\Omega_{k+1}(\xi, T, (1 + \frac{4}{k-1})\epsilon) \cup \Omega_{k+2}(\xi, T, R\epsilon)\right) \right)\leq \frac{C}{e^{ckR}}~.  \]
\end{quotedtheorem}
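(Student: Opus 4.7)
My plan is to follow the proof of Theorem~\ref{thm:sine}' and substitute the $\xi$-function analogues of its probabilistic inputs. The deterministic backbone---interlacing of the zeros of $\xi$ and $\xi'$ on the critical line under RH, the Hadamard representation (\ref{eq:hadlogder}), and the Nevanlinna property of $W_{t,T}$---is already available from Section~\ref{xi}, so only the probabilistic estimate needs to be reproduced in the new setting.

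The structural reduction proceeds as follows. For $t$ in the exceptional set, interlacing forces $k-1$ zeros of $\xi$ strictly between the $k$ critical points (hence inside the $\epsilon$-window), plus two bracketing zeros $y_0 < y_k$. Since $t \notin \Omega_{k+1}(\xi, T, (1+\tfrac{4}{k-1})\epsilon)$, at least one of $y_0, y_k$---say $y_k$ by symmetry---lies outside the $(1+\tfrac{4}{k-1})\epsilon$-window; since $t \notin \Omega_{k+2}(\xi, T, R\epsilon)$, at most $k+1$ zeros lie in the $R\epsilon$-window. Evaluating (\ref{eq:hadlogder}) at each critical point and subtracting the contribution of the at most $k+1$ near poles, one obtains $k$ interpolation constraints on the ``tail'' of $W_{t,T}$ at $k$ distinct points of the $\epsilon$-window.

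A Cauchy-kernel pressure computation---identical to the one underlying Theorem~\ref{thm:sine}'---shows that these $k$ constraints, together with the outlier $y_k$ separated from the $\epsilon$-window by $\geq \tfrac{4\epsilon}{k-1}$, cannot be simultaneously fulfilled unless $W_{t,T}(iR\epsilon)$ deviates from its mean value $i\pi$ (cf.\ (\ref{eq:a})) by an amount of order one. The probabilistic closing step then invokes the $L^2$ bound (\ref{eq:b}), which gives a bound of $O(1/R^2)$ on the measure of single-scale deviations; to upgrade the polynomial bound to the claimed exponential $e^{-ckR}$ one either iterates (\ref{eq:b}) at a dyadic family of scales and exploits the $k$-fold product structure of the interpolation constraints, or appeals to higher moments of $\xi'/\xi$ on the critical line that are available under RH via Selberg-type mollifier estimates.

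The main obstacle is the quantitative pressure step: propagating the outlier structure (one zero $y_k$ outside the inner window, no extra zeros inside the $R\epsilon$-window) into a coherent deviation of $W_{t,T}$ whose size grows linearly in $kR$. This is the analytic heart of the argument, shared with Theorem~\ref{thm:sine}'; it uses no arithmetic information, only the Nevanlinna structure of $W_{t,T}$ and Cauchy-kernel estimates. Once it is in place, the transfer from the sine process to $\xi$ is essentially mechanical, as the role of the sine kernel's determinantal second-moment bound is played by Fujii's estimate (\ref{eq:fujii}), equivalently by (\ref{eq:b}).
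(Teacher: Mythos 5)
The structural part of your plan is on the right track and in fact the paper's reduction is \emph{simpler} than your ``$k$ interpolation constraints at $k$ points'' picture: one only evaluates $W_{t,T}$ at the two endpoints $\pm\epsilon$, uses interlacing and the absence of zeros in $(\epsilon, (1+\frac{4}{k-1})\epsilon)$ (or its mirror) to deduce that the tail sum $\sum_{|x_{j,t,T}\mp\epsilon|\ge(R-1)\epsilon}\frac{1}{x_{j,t,T}\mp\epsilon}$ is large, and then passes from $W_{t,T}(\pm\epsilon)$ to $W_{t,T}(20iR\epsilon)$ by splitting into near/far zeros and estimating the near part on the complement of $\Omega_{k+2}(\xi,T,R\epsilon)$. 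The end point is the single event $\{|W_{t,T}(20iR\epsilon)|\ge \tfrac{k-1}{480\epsilon}\}$.

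The genuine gap is in your probabilistic closing step. Fujii's estimate (\ref{eq:fujii}) and the $L^2$ bound (\ref{eq:b}) only give a decay of order $R^{-2}k^{-2}$ by Chebyshev, and your proposed upgrade by ``iterating (\ref{eq:b}) at a dyadic family of scales and exploiting the $k$-fold product structure'' does not work: there is no independence (or even approximate factorization) between the events at different scales, so you cannot multiply the $L^2$ bounds, and a fixed number of polynomially-decaying bounds cannot assemble into something exponential in $kR$. Your alternative, ``higher moments of $\xi'/\xi$ via Selberg-type mollifier estimates,'' is correct in spirit but vague in a way that obscures exactly what makes the argument close. The paper invokes Rodgers' theorem
\[
\int_0^T \left| \frac{\zeta'}{\zeta}\Bigl(\tfrac12 + i\bigl(t + \tfrac{2\pi i\delta}{\log T}\bigr)\Bigr)\right|^m dt \leq C^{m \log m}\,\delta^{-m}\, T \log^m T~,
\]
and the essential point is the explicit, only moderately growing constant $C^{m\log m}$: after rescaling this gives $\frac1T\int_0^T |W_{t,T}(20iR\epsilon)|^m\,dt \le C^{m\log m}(R\epsilon)^{-m}$, and it is the optimization over $m$ with this specific constant profile that delivers the $e^{-ckR}$ bound via Chebyshev. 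Without pinning down this $m$-dependence, the passage from polynomial to exponential decay is not justified, and this is the analytic heart you need; it replaces not (\ref{eq:b}) but the subexponential concentration lemma for determinantal processes used in the proof of Theorem~\ref{thm:sine}'.
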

\begin{rmk*}
The coefficient $1 + \frac{4}{k-1}$ in these results can be further improved to $1 + \frac2k+ \mathcal o(1)$. 
\end{rmk*}

\mypar{rep} 

\begin{proof}[Proof of Theorem~\ref{thm:sine}'] 
By Proposition~\ref{aw}, the theorem is equivalent to Corollary~\ref{cor:wig}, and,
moreover, to its special case pertaining to  one (arbitrary) ensemble of random matrices; we choose the Gaussian Unitary Ensemble. (We could equally work 
directly with $\mathfrak{Si}$; in that case we would need to regularise all the sums.)
Denote
\[ x_{j,N} = (\lambda_{j,N} - E)N\rho(E)~, \quad x'_{j,N} = (\lambda'_{j,N} - E)N\rho(E)~, \]
where $a = - \frac{\pi E}{\sqrt{4-E^2}}$. 
 Let us first show that for any $R \geq1 + 4/(k-1)$
\begin{equation}\label{eq:3opt}\begin{split}
& \Omega_k(\mathfrak{Si}'_a, \epsilon) \setminus \left( \Omega_{k+1}(\mathfrak{Si}_a, (1 + \frac{4}{k-1}) \epsilon) \cup \Omega_{k+2}(\mathfrak{Si}_a, R\epsilon)\right)
  \\&\subset \left\{  \sum_{|x_{j,N} - \epsilon| \geq (R-1)\epsilon} \frac{1}{x_{j,N}-\epsilon} \geq \frac{k-1}{4\epsilon} \right\} \cup \left\{  \sum_{|x_{j,N} + \epsilon| \geq (R-1)\epsilon} \frac{1}{x_{j,N}+\epsilon} \leq - \frac{k-1}{4\epsilon} \right\}
\end{split}\end{equation}
Indeed, assume that 
\begin{equation}\label{eq:assumeloc} \# [| x_{j,N}'|  < \epsilon ] \geq k~, \quad \# [|x_{j,N}| < (1+\frac4{k-1}) \epsilon ] \leq k~. \end{equation}
 Then we have by interlacing:
\begin{equation}\label{eq:k-1}
\# \left[ x_{j,N}  \in (-\epsilon,\epsilon) \right]= k-1~,
\end{equation}
and on the other hand there are no $x_{j,N}$ at least in one of the intervals
\[ (-(1+\frac4{k-1})\epsilon,  - \epsilon)~, \quad
(+\epsilon, + (1+\frac4{k-1})\epsilon)~; \]
for example, in the second one. Then
\[ \sum_{j=1}^N \frac{1}{x_{j,N}- \epsilon} \geq 0~,\]
whence by  (\ref{eq:k-1}) and (\ref{eq:assumeloc})
\[\sum_{|x_{j,N}-\epsilon| \geq \frac4{k-1}\epsilon} \frac{1}{x_{j,N}-\epsilon} \geq  \sum_{|x_{j,N}| < \epsilon} \frac{- 1}{x_{j,N}-\epsilon}  \geq  \frac{k-1}{2\epsilon}~.\]
Every $x_{j,N}$  contributes at most $(k-1)/(4\epsilon)$ to the sum on the left-hand
side, therefore either $\# [(1+\frac4{k-1})\epsilon  \leq x_{j,N} < R \epsilon ]  \geq 2$
or  
\[ \sum_{|x_{j,N} - \epsilon| \geq (R-1)\epsilon} \frac{1}{x_{j,N}-\epsilon} \geq \frac{k-1}{4\epsilon}~. \]
This proves (\ref{eq:3opt}), and it remains to bound the probability of the two terms
on the right-hand side. By symmetry,
we can focus on the first term, for which we use the following  lemma, proved below (see e.g.\ Breuer--Duits \cite[Theorem 3.1]{BD} for more sophisticated bounds):
\begin{lemma*}For any determinantal process $\mathfrak{D}$ with self-adjoint kernel of
finite rank and any
(bounded Borel measurable) test function $f$,
\[ \mathbb{P} \left\{ \sum_{x \in \mathfrak{D}} f(x) \geq \mathbb E  \sum_{x \in \mathfrak{D}} f(x) + r \right\} \leq \exp(- A^*(r))~, \quad r > 0~,\]
where
\begin{equation}\label{eq:defA}
A^*(r) = \sup_{t \leq\|f\|_\infty^{-1}} (rt - A(t))~, \quad A(t) = \frac{et^2}{2} \, \mathbb{E} \sum_{x \in \mathfrak{D}} f(x)^2~.\end{equation}
\end{lemma*}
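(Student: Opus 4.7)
My plan is to prove this by the Chernoff route: estimate the moment generating function of $N_f := \sum_{x\in\mathfrak{D}} f(x)$ and apply Markov's inequality, optimizing over $t\in(0,\|f\|_\infty^{-1}]$. The starting identity is the Fredholm determinant formula for the generating function of a determinantal process with self-adjoint trace-class kernel $K$ (necessarily $0\leq K\leq I$),
\[ \mathbb{E}\, e^{tN_f} \;=\; \det\bigl(I+(e^{tf}-1)K\bigr), \]
so the stated tail bound follows mechanically once one establishes the cumulant estimate
\[ \log \mathbb{E}\, e^{tN_f} \;\leq\; t\,\mathbb{E} N_f + A(t), \qquad |t|\leq \|f\|_\infty^{-1}. \]

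The core step is this cumulant bound. My plan is to use the cyclic identity $\det(I+AB)=\det(I+BA)$ to rewrite
\[ \mathbb{E}\, e^{tN_f} \;=\; \det\bigl(I+K^{1/2}M_{e^{tf}-1}K^{1/2}\bigr), \]
where $M_g$ denotes multiplication by $g$. The operator $B_t:=K^{1/2}M_{e^{tf}-1}K^{1/2}$ is then self-adjoint, and because $e^{tf(x)}>0$ pointwise while $0\leq K\leq I$, its spectrum lies strictly above $-1$. Applying the elementary inequality $\log(1+x)\leq x$ eigenvalue by eigenvalue,
\[ \log\det(I+B_t) \;\leq\; \tr B_t \;=\; \int \bigl(e^{tf(x)}-1\bigr) K(x,x)\,dx. \]
The Taylor estimate $|e^y-1-y|\leq (e/2)\,y^2$ for $|y|\leq 1$, which holds pointwise under our restriction $|tf(x)|\leq 1$, then splits the integral as $t\int f(x)K(x,x)\,dx$ plus a remainder of modulus at most $(et^2/2)\int f(x)^2 K(x,x)\,dx=A(t)$. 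Using the one-point density formula $\mathbb{E}\sum_{x\in\mathfrak{D}} g(x)=\int g(x)K(x,x)\,dx$, the main term equals $t\,\mathbb{E} N_f$, yielding the desired cumulant inequality.

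The overall argument is a textbook Chernoff estimate adapted to DPPs, and I do not anticipate any serious obstacle. The only point meriting care is the self-adjoint rearrangement and the verification that $\operatorname{spec}(B_t)\subset(-1,\infty)$; both are immediate, the latter because $e^{tf}$ is strictly positive on all of its domain, which prevents any eigenvalue of $B_t$ from reaching $-1$ even in the limiting case where $K$ has eigenvalue one.
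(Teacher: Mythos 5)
Your proof is correct and follows essentially the same Chernoff route as the paper: the Fredholm-determinant formula for $\mathbb{E}\,e^{tN_f}$, the linearization $\log(1+x)\leq x$ applied through $\log\det = \tr\log$, the pointwise Taylor bound $e^y \leq 1 + y + (e/2)y^2$ for $|y|\leq 1$, and Markov's inequality. The only cosmetic difference is that you symmetrize $(e^{tf}-1)K$ to $K^{1/2}M_{e^{tf}-1}K^{1/2}$ via the cyclic identity before invoking the eigenvalue inequality, whereas the paper applies $\tr\log(1+A)\leq \tr A$ directly; your version is a touch more careful, but it is the same argument.
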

\noindent We apply the lemma to
\[ f(x) = \frac{\mathbbm{1}_{|x-\epsilon|\geq (R-1)\epsilon}}{x -\epsilon}~, \quad F_N = \sum f(x_{j,N})~,\]
then, denoting by $\rho_N(E) = \frac1N \frac{d}{dE} \mathbb{E} \# \left\{ \lambda_{j,N} \leq E \right\}$ the mean density of eigenvalues, we have:
\[\begin{split} \mathbb E F_N &= \int_{|x| \geq (R-1)\epsilon} 
 \frac{  \rho_N ( E + \frac{x+\epsilon}{N\rho(E)} )  }   {x} dx \\
&= \int \left[ \frac{\mathbbm{1}_{|x|\geq (R-1)\epsilon}}{x} - \frac{x}{x^2+1} \right] \rho_N ( E + \frac{x+\epsilon}{N\rho(E)} )  dx + \int \frac{x}{x^2+1} \rho_N ( E + \frac{x+\epsilon}{N\rho(E)} ) dx~.\end{split}\]
The first addend tends to zero since $\rho_N \to \rho$ uniformly, whereas the 
second addend tends to $- \frac{\pi E}{\sqrt{4-E^2}}$ by  (\ref{eq:convW}). Therefore
\[\lim_{N \to \infty} \mathbb E F_N = - \frac{\pi E}{\sqrt{4-E^2}}~. \] 
Next, for $t \leq R\epsilon$
\[\begin{split}
\lim_{N \to \infty} A(t) = \frac{et^2}{2} \lim_{N \to \infty} \mathbb{E} \sum  f(x_{j,N})^2
= \frac{et^2}{2} \lim_{N \to \infty} \int_{|x|\geq R\epsilon} \frac{dx}{|x-\epsilon|^2}
\leq \frac{et^2}{(R-1)\epsilon}~,\end{split}\]
whence, taking $t = (R-1)\epsilon$ in the definition (\ref{eq:defA}) of $A^*$ and assuming
that $\epsilon < 1/(16e)$,
\[ \lim_{N \to \infty} A^*\left(\frac{k-1}{8 \epsilon}\right) \geq \frac{k-1}{8\epsilon} (R-1)\epsilon - e (R-1)\epsilon \geq \frac{(k-1)(R-1)}{16} ~. \]
According to the lemma, we have for $\epsilon \leq \min(\frac{\sqrt{4-E^2}}{8\pi|E|},\, \frac{1}{16e})$:
\[ \mathbb{P} \left\{ F_N \geq \frac{k-1}{4 \epsilon} \right\} \leq \exp\left\{ - \frac{(k-1)(R-1)}{16}\right\}  \leq \exp\left\{ - \frac{kR}{64}\right\} ~,\]
as claimed.
\end{proof}

\begin{proof}[Proof of Lemma]
Let $K$ denote  the operator defining the determinantal process; then for
$t \|f\|_\infty \leq 1$
\[\begin{split}
\log \mathbb E \exp\left\{  t \sum_{x \in \mathfrak{D}} f(x)\right\} &= \log \det (1 + (e^{tf}-1)K) = \tr \log   (1 + (e^{tf}-1)K) \\
&\leq \tr (e^{tf}-1)K \leq  \frac{et^2}{2} \tr f^2 K + t \tr fK \\
&\leq A(t) + t \mathbb{E} \sum_{x \in \mathfrak{D}} f(x)~.
\end{split}\]
Therefore by the Chebyshev inequality
\[ \mathbb{P} \left\{ \sum_{x \in \mathfrak{D}} f(x) \geq \mathbb E  \sum_{x \in \mathfrak{D}} f(x) +  r \right\} \leq
\exp(A(t) - rt)~.
\]
\end{proof}
\qedhere

\mypar{pfcor}
\begin{proof}[Proof of Corollary~\ref{cor:limit}']
Let us show that the limit 
\begin{equation}\label{eq:limitck}\mathcal c_k' = \lim_{\epsilon\to 0} \frac{\mathbb{P} (\Omega_k(\mathfrak{Si}'_a, \epsilon))}
{\epsilon^{(k+1)^2}}~, \quad k \geq 2 \end{equation}
exists and does not depend on $a$. Choose $\alpha_k > \alpha_k' > 0$
sufficiently small to ensure that 
\begin{equation}\label{eq:alphak}(1 - \alpha_k)(k+2)^2 > (k+1)^2~.\end{equation}
Denote by $\Upsilon(\epsilon)$ the event
\begin{multline}\label{eq:event}
\exists \quad \text{pairwise distinct distinct} \quad X = (x_1,\cdots,x_{k+1}) \in (\mathfrak{Si} \cap (-\epsilon^{1-\alpha_k},\epsilon^{1-\alpha_k}))^{k+1}~, \\ \text{such that all the zeros of $P_X(x) = \frac{d}{dx} \prod_{j=1}^{k+1} (x - x_j)$ lie 
in $(-\epsilon, \epsilon)$}~. \end{multline}
Let us show that
\begin{equation}\label{eq:1opt} \mathbb P (\Upsilon(\epsilon - \epsilon^{1+\alpha_k'})\setminus\Omega_k(\mathfrak{Si}'_a, \epsilon))~, \quad \mathbb P (\Omega_k(\mathfrak{Si}'_a, \epsilon) \setminus \Upsilon(\epsilon + \epsilon^{1+\alpha_k'})) = o(\epsilon^{(k+1)^2})~. 
\end{equation}
Indeed, on the event $\Upsilon(\epsilon - \epsilon^{1+\alpha_k'}) \setminus \left( \Omega_k(\mathfrak{Si}'_a, \epsilon) \cup \Omega_{k+2}(\mathfrak{Si}, \epsilon^{1-\alpha_k})\right)$ at least one of the $x_j$ lies outside $(-\epsilon, \epsilon)$; assume for example
that $x_{k+1} > \epsilon$ and decompose
\[ 0 > W(\epsilon) =  \sum_{j=1}^{k+1} \frac{1}{x_j - \epsilon} +  \lim_{r \to \infty} \sum_{\substack{ x \in \mathfrak{Si} \\ \epsilon^{1-\alpha_k} \leq |x| \leq r}} \frac{1}{x_j - \epsilon}~.\]
Then the first term is bounded from below by 
\[ \sum_{j=1}^{k+1} \frac{1}{x_j - \epsilon}  \geq \sum_{j=1}^{k+1} \frac{1}{x_j - \epsilon - \epsilon^{1+\alpha_k'} } + \frac{(k+1)\epsilon^{1+\alpha_k'}}{4\epsilon^2} \geq \frac{k+1}{4 \epsilon^{1-\alpha_k'}}~.\]
The probability of the event
\[ \lim_{r \to +\infty} \sum_{\substack{ x \in \mathfrak{Si} \\ \epsilon^{1-\alpha_k} \leq |x| \leq r}} \frac{1}{x_j - \epsilon} \leq -\frac{k+1}{4 \epsilon^{1-\alpha_k'}} \]
is $\mathcal o(\epsilon^{(k+1)^2})$ by a tail estimate which follows from the Lemma in
Section~\ref{rep} (similarly to the 
proof of Theorem~\ref{thm:sine}), whereas $\mathbb{P} (\Omega_{k+2}(\mathfrak{Si}, \epsilon^{1-\alpha_k})) = \mathcal o(\epsilon^{(k+1)^2})$ by the condition (\ref{eq:alphak})
on $\alpha_k$ and (\ref{eq:rep0}). This proves the first part of (\ref{eq:1opt});
the second part is proved in a similar way.

From (\ref{eq:defsine}) and the asymptotics
\[ \det \left(\frac{\sin \pi(x_j - x_m)}{\pi(x_j - x_m)}\right)_{j,m=1}^{k} = 
(1 + \mathcal o(1)) c_{k,1} \prod_{j < m} (x_j - x_m)^2 \quad x \to 0\]
(where $c_{k,1}$ as well as $c_{k,2}$ and $c_{k,3}$ below are numerical constants),
we obtain
\[\begin{split}\mathbb{P}(\Upsilon(\epsilon)) &= (c_{k,2}+ \mathcal o(1)) \epsilon \int_{\sum x_j = 0} d^{k} X   \prod_{j < m} (x_j - x_m)^2 \mathbbm{1}\left\{\text{the zeros of $P_X$ lie in $(-\epsilon, \epsilon)$}\right\} \\
&= (c_{k,2}+ \mathcal o(1)) c_{k,3} \epsilon^{(k+1)^2}~,   \end{split} 
\] 
which implies, with (\ref{eq:1opt}), that (\ref{eq:limitck}) holds with
$\mathcal c_k' = c_{k,2} c_{k,3}$. Note that the indicator under
the integral is compactly supported.

\noindent The bound 
\[ \mathcal c_k' \in [\mathcal c_{k+1}, (1 + \frac{4}{k-1}) \mathcal c_{k+1})^{(k+1)^2}] \]
follows from Theorem~\ref{thm:sine}' and (\ref{eq:rep0}). \qedhere \end{proof}

\mypar{uncond} The proof of Theorem~\ref{thm:xi} relies on a bound, proved
by Rodgers \cite{Rodgers2}, on the moments of
the logarithmic derivative of the $\zeta$-function; see (\ref{eq:rod})
below. The  results of  Farmer, Gonek, Lee and Lester \cite{FGLL} imply
a more precise bound under additional hypotheses.

\begin{proof}[Proof of Theorem~\ref{thm:xi}']
Let $\frac12 + i\gamma_j$ be the zeros of $\xi(z)$; rescale them as follows: $x_{j,t,T} = (\gamma_j - t) \frac{\log T}{2\pi}$. We start with the following counterpart of (\ref{eq:3opt}):
\[\begin{split} 
&\frac1T \operatorname{mes} \left(\Omega_k(\xi', T, \epsilon) \setminus \left(\Omega_{k+1}(\xi, T, (1 + \frac{4}{k-1})\epsilon) \cup \Omega_{k+2}(\xi, T, R\epsilon)\right) \right) \\
&\leq \frac1T \operatorname{mes} \left( \left\{  W_{t,T}(\epsilon) - \sum_{|x_{j,t,T} - \epsilon| < (R-1)\epsilon} \frac{1}{x_{j,t,T}-\epsilon} \geq \frac{k-1}{4\epsilon} \right\} \Big\backslash \Omega_{k+2}(\xi, T, R\epsilon)\right) \\
&+\frac1T \operatorname{mes}  \left( \left\{  W_{t,T}(-\epsilon) - \sum_{|x_{j,t,T} + \epsilon| < (R-1)\epsilon} \frac{1}{x_{j,t,T}+\epsilon} \leq - \frac{k-1}{4\epsilon} \right\} \Big\backslash  \Omega_{k+2}(\xi, T, R\epsilon)\right)
~.\end{split}\]
To show that each of the two terms is bounded by 
$\frac{C}{e^{ckR}}$, it will suffice to prove that
\begin{equation}\label{eq:uncondneed}
\frac1T \operatorname{mes} \left(  \left\{ \left| W_{t,T}(0)- \sum_{|x_{j,t,T}| < R\epsilon} \frac{1}{x_{j,t,T}}\right| \geq \frac{k-1}{4\epsilon} \right\}  \Big\backslash  \Omega_{k+2}(\xi, T, R\epsilon)\right) \leq\frac{C}{e^{ckR}}~. 
\end{equation}
Decompose
\begin{multline*}
W_{t,T}(0)- \sum_{|x_{j,t,T}| < R\epsilon} \frac{1}{x_{j,t,T}} \\
= \Re W_{t,T}(20 iR\epsilon) - \sum_{|x_{j,t,T}| < R\epsilon} \frac{x_{j,t,T}}{x_{j,t,T}^2 + 400 R^2 \epsilon^2}  + \sum_{|x_{j,t,T}| \geq R\epsilon} \frac{400 R^2 \epsilon^2}{x_{j,t,T}(x_{j,t,T}^2 + 400 R^2 \epsilon^2)}\end{multline*}
and observe that on the complement of $\Omega_{k+2}(\xi, T, R \epsilon)$
\begin{equation}\label{eq:oncompl}\left|  \sum_{|x_{j,t,T}| < R\epsilon} \frac{x_{j,t,T}}{x_{j,T}^2 + 400 R^2 \epsilon^2}  \right| \leq 
\frac{k+1}{20 R\epsilon} \leq \frac{k-1}{6 \epsilon}\end{equation}
whereas
\[\left| \Re W_{t,T}(20 iR\epsilon)\right|~, \quad \frac1{20}\left|\sum_{|x_{j,t,T}| \geq R\epsilon} \frac{400 R^2 \epsilon^2}{x_{j,t,T}(x_{j,t,T}^2 + 400 R^2 \epsilon^2)}\right| \leq \left| W_{t,T}(20 iR\epsilon) \right|~,\]
whence we need to show that 
\begin{equation}\label{eq:need}
\frac1T \operatorname{mes} \left\{ 0 \leq t \leq T \, \mid \, |W_{t,T}(20 i R \epsilon)| \geq  \frac{k-1}{480\epsilon}\right\} \leq C \exp(-c kR)~. 
\end{equation}
By a result of Rodgers \cite[Theorem~2.1]{Rodgers2}, 
\begin{equation}\label{eq:rod} \int_0^T \left| \frac{\zeta'(\frac12 + i(t + \frac{2\pi i\delta}{\log T}))}{\zeta(\frac12 + i(t + \frac{2\pi i\delta}{\log T}))}\right|^m dt \leq C^{m \log m} \delta^{-m} \, T \log^m T~,\end{equation}
hence 
\[ \int_0^T \left| \frac{\xi'(\frac12 + i(t + \frac{2\pi i\delta}{\log T}))}{\xi(\frac12 + i(t + \frac{2\pi i\delta}{\log T}))}\right|^m dt \leq C^{m \log m} \delta^{-m} \, T \log^m T~,\]
and, finally,
\begin{equation}\label{eq:logdermoment1} \frac{1}{T} \int_0^T   \left| W_{t,T}(20 iR\epsilon) \right|^m dt \leq C^{m \log m} (R\epsilon)^{-m}~,\end{equation}
from which (\ref{eq:need}) follows by the Chebyshev inequality. 
\end{proof}

\mypar{remarks} We conclude with several questions and comments.

\myspar{r.oth}{Other random matrix ensembles} The results of this paper have
counterparts for other random matrix ensembles (and other $\xi$-functions).
In particular, Chhaibi, Najnudel and Nikeghbali \cite{CNN} showed that the characteristic polynomial $Z_N(z) = \det(\mathbbm{1}_N - z U_N^*)$
of the Circular Unitary Ensemble satisfies:
\[ \frac{Z_N(\exp(2\pi i z/N))}{Z_N(1)} \overset{\text{distr}}\longrightarrow \exp(i \pi z) \Phi(z)~.\]
Hence the collection of critical points of $Z_N(z) z^{-N/2}$ converges, after rescaling
by $N$, to  the same process $\mathfrak{Si}'_0$  as in Corollaries~\ref{aw} and \ref{xi}. 
This is consistent with the discussion in \cite[Sections 2.3 and 6.2]{FGL}.

\myspar{r.comp}{Critical points versus submatrix eigenvalues} Let
\[W^\text{dec} (z) =
\lim_{R \to \infty} \sum\nolimits_{x \in \mathfrak{Si} \cap (-R,R)} \frac{|g_x|^2}{x - z}~,
 \]
 where, conditionally on $\mathfrak {Si}$, the random variables $g_x$ are
 independent complex standard Gaussian. Then Proposition~\ref{aw} has the following 
 counterpart:
\begin{equation}\label{eq:subm}\begin{split}
&\left( \{ \lambda_{j,N} - E) N \rho(E)\}~,
 \{(\lambda_{j,N-1} - E) N \rho(E)\} \right) \\
&\quad\longrightarrow  \big((W^\text{dec})^{-1}(-\frac{\pi E}{\sqrt{4-E^2}}), \, W^{-1}(\infty)\big)~.\end{split}\end{equation}
(Amusingly, the distribution of the first term does not depend on $E$.) We are not 
sure whether (\ref{eq:subm}) has a number-theoretic analogue.

\myspar{r.form}{Form factors} 
Suppose a point process $\mathfrak P$ is a limit point 
of the rescaled zeros of $\xi$, as in Proposition~\ref{xi}. Then the rescaled critical
points converge, along the same subsequence, to the point process $\mathfrak P'  = W_{\mathfrak P}^{-1}(0)$, the distribution of which is uniquely determined by that of
$\mathfrak P$. 
By a result of Montgomery \cite{Mont} and its extension by 
Hejhal \cite{Hej} and Rudnick and Sarnak \cite{RS},  the (multiple) form factors of $\mathfrak P$
coincide, in a restricted domain of momenta, with those of the sine process. 
In view of the research programme suggested by Farmer, Gonek and Lee in
 \cite{FGL}, it is natural to ask which constraints does this impose on the form factors of $\mathfrak P'$, and
to compare these with the results of  \cite{FGL,Bian}.

It would also be interesting to compute the form factor of $\mathfrak{Si}'_a$ and
to check whether it coincides with the right-hand side of (\ref{eq:fgl}) for $a=0$ and $|\alpha| \leq 1$. A possible starting point is the identity 
$e_k(\lambda_1', \cdots, \lambda_{N-1}') = (1 - \frac k N) \, e_k(\lambda_1, \cdots,
\lambda_N)$
relating the elementary symmetric functions in the critical points of a polynomial
to the elementary symmetric functions in the zeros.

\myspar{r.higher}{Zeros of higher derivatives} The result (\ref{eq:fgl}) of \cite{FGL} was extended to higher derivatives of $\xi$ in the Ph.D. thesis of Bian \cite{Bian}. In this context, we
mention that, if the Riemann
hypothesis and the multiple correlation conjecture hold, Corollary~\ref{xi} implies that
for any $k \geq 1$
\[ \sum_{\xi^{(k)}(\frac12 + i \gamma''') =0} \delta(u - (\gamma''' - t) \frac{\log T}{2\pi})
\overset{\text{distr}}\longrightarrow \mathfrak{Si}_0^{(k)} = \{ \Phi^{(k)} = 0\}~. \]

\myspar{r.logder}{Logarithmic derivative}
From Proposition~{\thesubsection} and \cite[Theorems 2.3 and 6.2]{AW}
of Aizenman--Warzel it follows, conditionally on the Riemann hypothesis, that (for $t$ chosen uniformly  in $[0, T]$)
the rescaled logarithmic derivatives
\begin{equation}
\frac{2 i}{\log T} \frac{\xi'(\frac12 + it)}{\xi(\frac12 + it)}~, \quad \frac{2 i}{\log T} \frac{\zeta'(\frac12 + it)}{\zeta(\frac12 + it)} + i \overset{\operatorname{distr}}{\underset{T \to \infty}\longrightarrow} \operatorname{Cauchy}~,\end{equation}
where the right-hand side is a standard (real) Cauchy random variable. For comparison, it was proved by Lester \cite{Lester2}, following earlier results
by Guo \cite{Guo}, that the distribution of $(\zeta'/\zeta)(\sigma(T) + it)$, 
where $|\sigma(T) - \frac12| \log T \to \infty$ and $|\sigma(T) - \frac12| \to 0$, is 
approximately (complex) Gaussian for large $T$.

\bigskip\noindent{\textbf{Acknowledgement}} 
This work originated  from a question of Yan Fyodorov, who asked how to
describe the local statistics of critical points,  and from  subsequent discussions,
particularly, of the strong repulsion; he also made helpful comments and suggestions on all the subsequent stages. Steve Lester patiently 
answered numerous questions and helped me find the way in the 
number-theoretic literature, and, in particular, made the crucial remark that the estimate (\ref{eq:b}) should be true and provable by the methods of \cite{GGM}.  Dave Platt 
numerically computed the saddle points of $\xi(z)$, the spacing statistics of which
are plotted in Figure~\ref{fig:comp}. Jon Keating
and Zeev Rudnick commented on a preliminary version of this paper. I thank them very much.

\end{document}